\def\BState{\State\hskip-\ALG@thistlm}
\numberwithin{equation}{section}
\newtheorem{theorem}{Theorem}[section]
\newtheorem{cor}[theorem]{Corollary}
\newtheorem{lem}[theorem]{Lemma}
\newtheorem{prop}[theorem]{Proposition}
\newcommand{\bigzero}{\mbox{\normalfont\Large\bfseries 0}}
\newcommand{\ds}{\displaystyle}
\newcommand{\cof}{\textup{cof }}
\title{Inverse of the Squared Distance Matrix of a\\
 Complete Multipartite Graph}
\author{Joyentanuj Das\footnote{School of Informatics, The University of Edinburgh, Edinburgh EH8 9AB, Scotland, U.K.    Email: joyentanuj@gmail.com,  joyentanuj@ed.ac.uk}  \quad and \quad Sumit Mohanty\footnote{Indian Institute of Management Ranchi,  Prabandhan Nagar, Vill-Mudma, Nayasarai Road, Ranchi, Jharkhand-835303, India. \indent   Email:  sumitmath@gmail.com, sumit.mohanty@iimranchi.ac.in}}
\date{}
\begin{document}

\maketitle

\begin{abstract}
Let $G$ be a connected graph on  $n$ vertices and $d_{ij}$   be  the length of the shortest path between vertices $i$ and $j$ in $G$. We set $d_{ii}=0$ for every vertex $i$ in $G$.  The squared distance matrix $\Delta(G)$ of $G$ is the $n\times n$ matrix with $(i,j)^{th}$ entry equal to $0$ if $i = j$ and equal to $d_{ij}^2$ if $i \neq j$.  For a given complete $t$-partite graph $K_{n_1,n_2,\cdots,n_t}$ on $n=\sum_{i=1}^t n_i$ vertices, under some condition we find the inverse  $\Delta(K_{n_1,n_2,\cdots,n_t})^{-1}$ as a rank-one perturbation of a symmetric Laplacian-like matrix $\mathcal{L}$ with $\textup{rank} (\mathcal{L})=n-1$. We also investigate the inertia of $\mathcal{L}$.
\end{abstract}

\noindent {\sc\textsl{Keywords}:} Squared  distance matrix, Complete $t$-partite graphs,  Laplacian-like matrix,  Inverse, Inertia. 

\noindent {\sc\textbf{MSC}:}  05C12, 05C50

\section{Introduction  and Motivation}
Let $G$ be a  connected graph on vertices $1,2,\ldots,n$, and $d(i,j)$ be the length of the shortest path between vertices $i$ and $j$. We set $d(i,i)=0$ for $i=1,2,\dots,n.$ The distance matrix of a graph $G$ on $n$ vertices is an $n \times n$ matrix $D(G)=[d_{ij}]$, where $d_{ij}=d(i,j)$. In the literature, the  Hadamard product  $D(G)\circ D(G)$ is called the squared distance matrix of $G$ and is defined as $\Delta (G)=[d_{ij}^2]$. Thus, $\Delta (G)$ is a real symmetric matrix, and the eigenvalues of $\Delta (G)$ are real.

Before proceeding further, we now introduce a few notations. Let $I_n$ and $ \mathds{1}_n$ denote the identity matrix and the column vector of all ones,  respectively.   We use $\mathbf{0}_{m \times n}$ to represent zero matrix of order $m \times n$. We simply write $ \mathds{1}$,  $I$  and $\mathbf{0}$  if there is no scope of confusion with respect to the order. Further, $J_{m \times n}$  denotes the $m\times n$ matrix of all ones, and if $m=n$, we use the notation $J_m$. Given a matrix $A$, we write $A^t$  to denote the transpose of the matrix $A$. For a  symmetric  matrix $A$, the inertia of  $A$, denoted by $\textup{In}(A)$ is  the triplet  $(\mathbf{n}_{+}(A),\mathbf{n}_{0}(A), \mathbf{n}_{-}(A) )$, where $\mathbf{n}_{+}(A),\mathbf{n}_{0}(A)  \mbox{ and } \mathbf{n}_{-}(A)$ denote the number of positive eigenvalues of $A$, the multiplicity of  $0$  is an eigenvalue of $A$ and the number of negative eigenvalues of $A$, respectively. 

A well-known results  due to Graham and Pollak~\cite{Gr1}, if $T$  is a tree with $n$ vertices, then determinant of the distance matrix $D(T)$ is given by $\det D(T)=(-1)^{n-1}(n-1)2^{n-2}.$ Thus, the determinant does not depend on the structure of the tree but only on the number of vertices. Later in \cite{Gr2}, the inverse of $D(T)$ is obtained by Graham and Lov$\acute{\textup{a}}$sz as a rank one perturbation of the Laplacian matrix of $T$. These two results invoke significant interest, and several extensions and generalizations have been obtained (for example see~\cite{Bp3,JD1, Hou2,Zhou1}). Given a graph $G$, the primary objective of these results is to define a matrix  $\mathcal{L}$ called Laplacian-like matrix satisfying  $\mathcal{L}\mathds{1} = \mathbf{0}$ and $\mathds{1}^t \mathcal{L} = \mathbf{0}$ and find the inverse of the $D(G)$ as a rank one perturbation of $\mathcal{L}$. 

In~\cite{Bp4, Bp5}, Bapat and  Sivasubramanian first studied the squared distance matrix of a tree. To be specific, Bapat and  Sivasubramanian proved a formula for the determinant of the squared distance matrix $\Delta(T)$ of a tree $T$ in~\cite{Bp4}. Later, in~\cite{Bp5}, they obtained the inverse $\Delta(T)^{-1}$ (whenever it exists) as a rank one perturbation of a matrix and also computed the inertia of $\Delta(T)$. Bapat in~\cite{Bp5}, studied the determinant and the inverse of the squared distance matrix of a weighted tree. In~\cite{JD3}, Das and Mohanty studied the squared distance matrix of a complete multipartite graph  $K_{n_1,n_2,\cdots,n_t}$. To be precise, in~\cite{JD3}, authors first computed the inertia, energy of  $\Delta(K_{n_1,n_2,\cdots,n_t})$. Next, for fixed values of $n$ and $t$, they discussed the existence and the uniqueness of graphs for which the spectral radius and the energy of $\Delta(K_{n_1,n_2,\cdots,n_t})$ attained its maximum and minimum value. In~\cite{IM}, Mahato and Kannan obtained the determinant and the inverse of the squared distance matrix of a tree with matrix weights. In~\cite{How}, Howell, Kempton, Sandall and Sinkovic gave an alternative proof to obtain the inertia of the squared distance matrix of a tree due to Bapat and  Sivasubramanian in~\cite{Bp5} and also studied the inertia for a unicyclic graph. In the literature,  the spectral properties and the inverse of complete multipartite graphs has been studied with respect to the distance matrix and the adjacency matrix  (for example see~\cite{JD1,Delorme,Obo,Stev}).

In this article, we obtain a Laplacian-like matrix $\mathcal{L}$ with $\textup{rank} (\mathcal{L})= n-1$ for a given complete multipartite graph  $K_{n_1,n_2,\cdots,n_t}$, and prove that the inverse of  $\Delta(K_{n_1,n_2,\cdots,n_t})$ (whenever it exists) is a rank one perturbation of $\mathcal{L}$. In this process, we find a few interesting recurrence type relations involving $n_1,n_2,\cdots,n_t$. We also observe a few properties of $\mathcal{L}$,  compute the  $ \textup{In}(\mathcal{L})$ if $\det \Delta(K_{n_1,n_2,\cdots,n_t})\neq 0$ and give a  conjecture about the $\textup{In}(\mathcal{L}$) if $\det \Delta(K_{n_1,n_2,\cdots,n_t})=0$. 

This article is organized as follows. In Section~\ref{sec:prelim_results}, we prove a few preliminary results that are necessary for the subsequent sections. In Section~\ref{sec:inverse}, we compute the inverse of the squared distance matrix $\Delta(K_{n_1,n_2,\cdots,n_t})$ (whenever it exists).  Finally, in Section~\ref{sec:properties_of_L}, we obtain a few properties of $\mathcal{L}$ and investigate the inertia   $ \textup{In}(\mathcal{L})$.

\section{Some Preliminary Results}\label{sec:prelim_results}

Let $A$ be an $n \times n$ matrix. Let $A(i \mid j)$ be  the submatrix obtained by deleting the $i^{th}$ row and the $j^{th}$ column and for $1 \leq i,j \leq n$, the cofactor $c(i,j)$ is defined as $(-1)^{i+j} \det A(i \mid j)$. We use the notation  $\cof A$ to denote the sum of all cofactors of $A$ and state the following result.
\begin{lem}\label{Lem:cof}\cite{Bapat}
Let $A$ be an $n \times n$ matrix. Let $M$ be the matrix obtained from $A$ by subtracting the first row from all other rows and then subtracting the first column from all other columns. Then $$\cof A = \det M(1 | 1).$$
\end{lem}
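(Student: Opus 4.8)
The plan is to realize $\cof A$ as a single bordered determinant and then perform on that bordered matrix exactly the row and column operations described in the statement. Writing $\textup{adj}(A)$ for the adjugate of $A$, whose $(i,j)$ entry is the cofactor $c(j,i)$, we have $\cof A=\sum_{i,j}c(i,j)=\mathds{1}^t\,\textup{adj}(A)\,\mathds{1}$. I would first record the identity
\[
\det\!\begin{pmatrix} A & \mathds{1}\\ \mathds{1}^t & 0\end{pmatrix}=-\cof A .
\]
For nonsingular $A$ this is immediate from the Schur-complement expansion along the $A$-block, $\det(\cdot)=\det(A)\,(0-\mathds{1}^tA^{-1}\mathds{1})=-\mathds{1}^t\textup{adj}(A)\mathds{1}$, and it extends to every $A$ because both sides are polynomials in the entries of $A$.

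Next I would apply the prescribed operations to the $(n+1)\times(n+1)$ bordered matrix $\widetilde A=\begin{pmatrix}A&\mathds 1\\ \mathds 1^t&0\end{pmatrix}$: subtract its first row from rows $2,\dots,n$, then subtract its first column from columns $2,\dots,n$. These are determinant-preserving elementary operations, so $\det\widetilde A$ is unchanged, while the top-left $n\times n$ block turns into precisely the matrix $M$ of the statement; in particular its trailing principal block is $M(1\mid 1)$. The bordering vectors collapse: subtracting the first row zeroes out entries $2,\dots,n$ of the last column (leaving a $1$ in the top slot), and subtracting the first column zeroes out entries $2,\dots,n$ of the last row (leaving a $1$ in the first slot). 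Thus $\widetilde A$ is transformed into
\[
\begin{pmatrix} A_{11} & w^t & 1\\ u & M(1\mid 1) & \mathbf 0\\ 1 & \mathbf 0^t & 0\end{pmatrix},
\qquad u=(A_{i1}-A_{11})_{i\ge 2},\ \ w^t=(A_{1j}-A_{11})_{j\ge 2}.
\]

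Finally I would evaluate this determinant by two Laplace expansions. The last row has a single nonzero entry, a $1$ in the first column; after deleting that row and column, the resulting $n\times n$ matrix $\begin{pmatrix} w^t & 1\\ M(1\mid 1) & \mathbf 0\end{pmatrix}$ again has a single nonzero entry in its last column, a $1$ in the top row, and deleting those leaves exactly $M(1\mid 1)$. Tracking the two signs gives $(-1)^{n}\cdot(-1)^{n+1}=-1$, so the transformed determinant equals $-\det M(1\mid 1)$. Combining with the first identity, $-\cof A=-\det M(1\mid 1)$, i.e.\ $\cof A=\det M(1\mid 1)$. The only delicate points are the sign bookkeeping in these two expansions and the passage to singular $A$ in the bordered-determinant identity; both are routine once set up. The conceptual content is simply that the prescribed operations concentrate the entire cofactor sum of $A$ into the single $(1,1)$ cofactor of $M$.
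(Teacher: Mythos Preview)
Your argument is correct. The bordered-determinant identity $\det\begin{pmatrix}A&\mathds 1\\ \mathds 1^t&0\end{pmatrix}=-\cof A$ is justified exactly as you say (Schur complement for invertible $A$, then polynomial density), the row and column operations you perform on $\widetilde A$ are determinant-preserving and produce the displayed block form, and the two Laplace expansions along the last row and then the last column give the sign $(-1)^n(-1)^{n+1}=-1$, yielding $\cof A=\det M(1\mid 1)$.

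There is nothing to compare against in the paper: the lemma is quoted from Bapat's textbook \cite{Bapat} and carries no proof here. Your bordered-determinant approach is in fact the standard textbook proof of this identity, so it is entirely appropriate. The only cosmetic remark is that when you say ``subtract its first row from rows $2,\dots,n$'' you should perhaps stress that row $n{+}1$ of $\widetilde A$ is left untouched (and likewise for the column step), since a reader might otherwise wonder whether the bordering row and column are included; but your subsequent computation makes the intent clear.
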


The following is a standard result on computing the determinant of block matrices.

\begin{prop}\label{prop:blockdet}~\cite{Zhang}
 Let $A_{11}
   \mbox{ and } A_{22}$ be square matrices.  Then 
   \begin{small}
   $$\det \left[
\begin{array}{c|c}
A_{11} & \bigzero  \\
\midrule
A_{21} & A_{22}
\end{array}
\right] = \det A_{11} \det A_{22}.$$
   \end{small}   
\end{prop}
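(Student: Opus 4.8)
The plan is to prove the identity directly from the Leibniz (permutation) expansion of the determinant, since the block-triangular structure forces the relevant sum to factor. Write the full matrix as $A = \left[\begin{smallmatrix} A_{11} & \mathbf{0} \\ A_{21} & A_{22}\end{smallmatrix}\right]$ of order $n = p+q$, where $A_{11}$ is $p \times p$ and $A_{22}$ is $q \times q$, and index its rows and columns by $\{1,\dots,n\}$. First I would partition the index set as $P = \{1,\dots,p\}$ and $Q = \{p+1,\dots,n\}$, and record the single structural fact we need: the $(i,j)$ entry $a_{ij}$ vanishes whenever $i \in P$ and $j \in Q$, because that region is exactly the zero block.

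Starting from $\det A = \sum_{\sigma \in S_n}\operatorname{sgn}(\sigma)\prod_{i=1}^n a_{i\sigma(i)}$, the key step is to discard the permutations that contribute nothing. If some $\sigma$ sends an index $i \in P$ to a column $\sigma(i) \in Q$, then the factor $a_{i\sigma(i)}$ lies in the zero block and the whole product vanishes. Hence only those $\sigma$ with $\sigma(P)\subseteq P$ survive, and since $\sigma$ is a bijection this forces $\sigma(P) = P$ and consequently $\sigma(Q) = Q$. Each surviving $\sigma$ therefore splits as a pair $(\sigma_1,\sigma_2)$, where $\sigma_1$ permutes $P$ and $\sigma_2$ permutes $Q$; using $\operatorname{sgn}(\sigma) = \operatorname{sgn}(\sigma_1)\operatorname{sgn}(\sigma_2)$ and regrouping the product, the sum factors as
$$\det A = \left(\sum_{\sigma_1}\operatorname{sgn}(\sigma_1)\prod_{i\in P}a_{i\sigma_1(i)}\right)\left(\sum_{\sigma_2}\operatorname{sgn}(\sigma_2)\prod_{i\in Q}a_{i\sigma_2(i)}\right),$$
which is exactly $\det A_{11}\,\det A_{22}$, since the entries of the $A_{21}$ block never appear in a surviving term.

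The only point requiring care --- and what I would regard as the crux rather than a genuine obstacle --- is the bijection-and-sign bookkeeping: justifying that $\sigma(P)\subseteq P$ together with injectivity of $\sigma$ forces $\sigma(P)=P$, and that after reindexing $Q$ to $\{1,\dots,q\}$ the signature is multiplicative over the two induced permutations. If a more algebraic route is preferred, I could instead use the factorization $\left[\begin{smallmatrix} A_{11} & \mathbf{0} \\ A_{21} & A_{22}\end{smallmatrix}\right] = \left[\begin{smallmatrix} A_{11} & \mathbf{0} \\ A_{21} & I_q\end{smallmatrix}\right]\left[\begin{smallmatrix} I_p & \mathbf{0} \\ \mathbf{0} & A_{22}\end{smallmatrix}\right]$, which holds with no invertibility hypothesis, and then apply multiplicativity of the determinant after evaluating each triangular factor by repeated cofactor expansion (down the last column of the first factor, and along the first columns of the second). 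Either route avoids any nonsingularity assumption on $A_{11}$, so the statement holds for arbitrary square diagonal blocks.
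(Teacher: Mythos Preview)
Your argument via the Leibniz expansion is correct, and the alternative factorization you mention is also valid. Note, however, that the paper does not actually prove this proposition: it is stated with a citation to Zhang's textbook \cite{Zhang} and used as a black box, so there is no ``paper's own proof'' to compare against. Since this is a standard linear-algebra fact quoted from the literature, either of your routes (permutation expansion or the product factorization) would serve perfectly well as a self-contained justification.
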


We will prove a lemma, which will help us to compute the  $\cof \Delta(K_{n_1,n_2,\cdots,n_t})$.

\begin{lem}\label{Lem:Det_3}
Let $C_m$ be an $m \times m$ matrix  of the following form
\begin{small}
\begin{equation*}\label{eqn:M_3}
C_m = \left[\begin{array}{ccccc}
n_1  & -4(n_1 - 1)& -4(n_1 - 1) & \cdots & -4(n_1 - 1)\\
n_2 & 2(n_2-2) & -n_2 &\cdots & -n_2\\
n_3 & -n_3 & 2(n_3-2) &\cdots & -n_3\\
\vdots & \vdots  &\vdots  & \ddots &\vdots\\
n_m  & -n_m & -n_m &  \cdots & 2(n_m-2)\\
\end{array} \right].
\end{equation*} 
\end{small}
The determinant of the above matrix is given by 
$$\det C_m = \sum_{i=1}^m \left( n_i \prod_{j=1\atop j \neq i}^m(3n_j - 4) \right).$$
\end{lem}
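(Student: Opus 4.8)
The plan is to reduce $C_m$ to an arrow (bordered diagonal) matrix by a single sweep of column operations, after which the determinant is immediate. The key observation is that for each row $i \ge 2$ the diagonal entry $2(n_i-2)=2n_i-4$ exceeds the off-diagonal entry $-n_i$ by exactly $3n_i-4$, which is precisely the factor occurring in the claimed product. Writing $d_i := 3n_i-4$ and noting that the first column of $C_m$ is $(n_1,n_2,\ldots,n_m)^t$, I would add the first column to each of the columns $2,\ldots,m$; this operation leaves $\det C_m$ unchanged. For a row $i\ge 2$, each off-diagonal entry $-n_i$ becomes $-n_i+n_i=0$, while the diagonal entry becomes $2(n_i-2)+n_i=3n_i-4=d_i$. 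For the first row, each entry $-4(n_1-1)$ becomes $-4(n_1-1)+n_1=-(3n_1-4)=-d_1$.

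Hence, without altering the determinant, $C_m$ is transformed into the arrow matrix with first column $(n_1,\ldots,n_m)^t$, first row $(n_1,-d_1,\ldots,-d_1)$, and lower-right $(m-1)\times(m-1)$ block equal to $\mathrm{diag}(d_2,\ldots,d_m)$. To evaluate this I would apply the Schur-complement (bordered-matrix) formula. Since each $n_i$ is a positive integer, $d_i=3n_i-4\neq 0$, so the diagonal block $D=\mathrm{diag}(d_2,\ldots,d_m)$ is invertible and the determinant equals $\det(D)\,(n_1-b^tD^{-1}c)$, where $b=-d_1\mathds{1}$ is the tail of the first row and $c=(n_2,\ldots,n_m)^t$ is the tail of the first column. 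A short computation yields $b^tD^{-1}c=-d_1\sum_{j=2}^m n_j/d_j$, so that $n_1-b^tD^{-1}c=n_1+d_1\sum_{j=2}^m n_j/d_j$. Multiplying through by $\det(D)=\prod_{j=2}^m d_j$ and using $d_1\prod_{k=2,\,k\neq j}^m d_k=\prod_{k=1,\,k\neq j}^m d_k$ for $j\ge 2$ collects the expression into $n_1\prod_{j\neq 1}d_j+\sum_{j=2}^m n_j\prod_{k\neq j}d_k=\sum_{i=1}^m n_i\prod_{j\neq i}(3n_j-4)$, which is exactly the claimed formula.

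The computation is entirely routine once the arrow structure is exposed, so I do not expect a genuine obstacle beyond spotting the correct column operation; the one point deserving care is the invertibility of $D$ needed for the Schur-complement step, which holds because $3n_j-4\neq 0$ for every positive integer $n_j$. If one wishes to sidestep this entirely, both sides of the identity are polynomials in $n_1,\ldots,n_m$, so it suffices to verify it on the Zariski-dense set where all $d_j\neq 0$; equivalently, one may expand the arrow determinant directly along the first column. Finally, the degenerate case $m=1$, in which the column sweep is vacuous, reads $\det C_1=n_1$, consistent with the empty-product convention in the stated formula.
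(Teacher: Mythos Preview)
Your proof is correct and follows essentially the same approach as the paper: both perform the identical column operation (adding the first column to each of the others) to reduce $C_m$ to the same arrow matrix. The only minor difference is in the final routine step, where the paper expands along the first row while you invoke the Schur complement; your remark that the invertibility hypothesis can be bypassed via the polynomial-identity argument or a direct cofactor expansion is well taken but unnecessary here, since the $n_i$ are positive integers.
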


\begin{proof}
Adding the first column to all the remaining columns of $C_m$ yields the following matrix
\begin{small}
$$
\left[\begin{array}{ccccc}
n_1  & -3n_1+4 & -3n_1+4 & \cdots & -3n_1+4\\
n_2 & 3n_2-4 & 0 &\cdots & 0\\
n_3 & 0 & 3n_3-4  &\cdots & 0\\
\vdots & \vdots  &\vdots  & \ddots &\vdots\\
n_m  & 0 & 0 &  \cdots & 3n_m-4 \\
\end{array} \right].
$$
\end{small}
 Now, expanding along the first row, we get 
$$ \det C_m = n_1 \prod_{j=2}^m (3n_j-4)+(3n_1-4)\sum_{i=2}^m \left(n_i \prod_{j=2 \atop j\neq i}^m (3n_j-4)\right),$$
 and the desired result follows.
\end{proof}

Let $\Delta(K_{n_1,n_2,\cdots,n_t})$ be the squared distance matrix of the complete $t$-partite graph $K_{n_1,n_2,\cdots,n_t}$. Then $\Delta (K_{n_1,n_2,\cdots,n_t})$ can be expressed in the following block form 
\begin{small}
\begin{equation}\label{eqn:D(K)}
\Delta(K_{n_1,n_2,\cdots,n_t}) = \left[\begin{array}{c|c|c|c}
4(J_{n_1} - I_{n_1}) & J_{n_1 \times n_2} & \cdots & J_{n_1 \times n_t}\\
\hline
J_{n_2 \times n_1} &4(J_{n_2} - I_{n_2}) & \cdots & J_{n_2 \times n_t}\\
\hline
\vdots & \cdots  & \ddots &\vdots\\
\hline
J_{n_t \times n_1} & J_{n_t \times n_2} &  \cdots & 4(J_{n_t} - I_{n_t})\\
\end{array} \right].
\end{equation}
\end{small}
Throughout this article we assume that the vertices of $\Delta(K_{n_1,n_2,\cdots,n_t})$ are indexed as in  Eqn.~\eqref{eqn:D(K)}.

\begin{theorem}\label{thm:cof}
Let $\Delta(K_{n_1,n_2,\cdots,n_t})$ be the squared distance matrix of the complete $t$-partite graph $K_{n_1,n_2,\cdots,n_t}$ on $n=\sum_{i=1}^t n_i$ vertices. Then, the sum of the cofactors  of the squared distance matrix is given by
$$\cof \Delta(K_{n_1,n_2,\cdots,n_t}) = (-4)^{n-t}\left[ \sum_{i=1}^t \left( n_i \prod_{j=1\atop j \neq i}^t (3n_j - 4) \right)\right].$$
\end{theorem}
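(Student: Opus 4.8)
The plan is to reduce the computation of $\cof \Delta(K_{n_1,\dots,n_t})$ to a determinant of size $t$ by combining Lemma~\ref{Lem:cof} with the block form in Eqn.~\eqref{eqn:D(K)}. By Lemma~\ref{Lem:cof}, if $M$ is obtained from $\Delta := \Delta(K_{n_1,\dots,n_t})$ by subtracting the first row from every other row and the first column from every other column, then $\cof \Delta = \det M(1\mid 1)$. Since $\Delta_{11}=0$, a direct computation gives $M_{ij} = \Delta_{ij} - \Delta_{1j} - \Delta_{i1}$ for all $i,j\ge 2$, so the first step is to evaluate this in the four cases determined by the parts containing $i$ and $j$ (taking the first vertex to lie in the first part, which we may assume has at least two vertices, since $\cof$ is invariant under simultaneous row--column relabeling). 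Using that diagonal entries of $\Delta$ vanish, off-diagonal entries within a part are $4$, and entries across parts are $1$, this yields an explicit block form for $M(1\mid 1)$.

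Explicitly, indexing the rows and columns of $M(1\mid 1)$ by the parts (the first part now contributing $n_1-1$ indices), the diagonal block of the first part is $-4(J_{n_1-1}+I_{n_1-1})$, the diagonal block of part $k\ge 2$ is $2(J_{n_k}-2I_{n_k})$, every block coupling the first part to a part $k\ge 2$ is $-4J$, and every block coupling two distinct parts $k,l\ge 2$ is $-J$. Each block has the form $\alpha J + \beta I$, so $M(1\mid 1)$ respects the \emph{equitable partition} induced by the parts, which is what makes the determinant tractable.

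The second step exploits this structure. Write $\R^{n-1}=V_0\oplus V_1$, where $V_0$ is spanned by the $t$ vectors $e_k$ (the all-ones vector supported on part $k$) and $V_1$ collects the vectors summing to zero on each part; both subspaces are invariant under the symmetric matrix $M(1\mid 1)$. On $V_1$ each diagonal block $\alpha J + \beta I$ acts as the scalar $\beta$ (as $J$ annihilates vectors summing to zero), while the off-diagonal blocks, being multiples of $J$, annihilate such vectors; since the coefficient of $I$ equals $-4$ in every diagonal block (the first part giving $-4J-4I$ and the others $2J-4I$), $M(1\mid 1)$ acts as $-4I$ on $V_1$, contributing the factor $(-4)^{\dim V_1}=(-4)^{\,n-t-1}$. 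On $V_0$ the map is represented in the basis $\{e_k\}$ by the $t\times t$ quotient matrix $B$, whose entries are read off from the block action on the all-ones vectors: $B_{11}=-4n_1$, $B_{k1}=-4(n_1-1)$ and $B_{1k}=-4n_k$ for $k\ge 2$, $B_{kk}=2(n_k-2)$, and $B_{lk}=-n_k$ for distinct $k,l\ge 2$. Hence $\det M(1\mid 1)=(-4)^{\,n-t-1}\det B$.

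The final step identifies $B$ with the matrix of Lemma~\ref{Lem:Det_3}. Comparing entries shows that rows $2,\dots,t$ of $B$ coincide with the corresponding rows of $C_t^{\,t}$, while the first row of $B$ equals $-4$ times the first row of $C_t^{\,t}$, which is $(n_1,\dots,n_t)$. Consequently $\det B = -4\det C_t^{\,t} = -4\det C_t$, and Lemma~\ref{Lem:Det_3} evaluates $\det C_t = \sum_{i=1}^t \big(n_i\prod_{j\ne i}(3n_j-4)\big)$. Combining the factors gives $\cof \Delta = (-4)^{\,n-t-1}\cdot(-4)\det C_t = (-4)^{\,n-t}\det C_t$, as claimed. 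The hard part will be the bookkeeping in the first two steps: correctly deriving the four entry-cases of $M(1\mid 1)$ and tracking the scalars $\alpha,\beta$ through the quotient so that the clean identity $\det B=-4\det C_t$ emerges, where the transpose relating $B$ to $C_t$ is easy to miss. The sign conventions and the boundary cases (for instance a singleton first part) also require care; one should either reorder the parts so that the first has at least two vertices or verify the resulting polynomial identity directly.
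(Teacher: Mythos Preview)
Your proof is correct and follows the same overall arc as the paper---reduce to $\det M(1\mid 1)$ via Lemma~\ref{Lem:cof}, extract the factor $(-4)^{n-t-1}$ from the ``internal'' directions, and identify the remaining $t\times t$ determinant with $-4\det C_t$ from Lemma~\ref{Lem:Det_3}---but the mechanism in the middle step is different. The paper carries out explicit row and column operations: within each block it subtracts the first column from the others, adds all rows to the first, and permutes to obtain a block lower-triangular matrix $\left[\begin{smallmatrix}\widetilde{C}_t & 0\\ * & -4I_{n-t-1}\end{smallmatrix}\right]$, then invokes Proposition~\ref{prop:blockdet}. You instead use the equitable-partition/invariant-subspace decomposition $\R^{n-1}=V_0\oplus V_1$, reading off $-4I$ on $V_1$ and the quotient matrix $B$ on $V_0$. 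Your $B$ is exactly the transpose of the paper's $\widetilde{C}_t$, so the two routes land on the same determinant. Your argument is a bit more conceptual and makes clear why the reduction works (the block structure $\alpha J+\beta I$), while the paper's elementary operations avoid any appeal to invariant subspaces. One small point: the paper disposes of the case $n_i=1$ for all $i$ in a single line (then $\Delta=J-I$ and $\cof=(-1)^{t-1}t$), whereas your remark about boundary cases covers only a singleton \emph{first} part; you should add that one-line check for $K_t$.
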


\begin{proof}
For complete $t$-partite graph $K_{n_1,n_2,\cdots,n_t}$ with $n_i=1$ for all $1\leq i\leq t$, we have $K_{n_1,n_2,\cdots,n_t}=K_t$. Then, $\Delta(K_t)=D(K_t)=J_n-I_n$ and the result is true as $\cof \Delta(K_t)=\cof D(K_t)= (-1)^{t-1} t$. For other cases, without loss of generality, let $n_1>1$ and  $M$ be the matrix obtained from $\Delta(K_{n_1,n_2,\cdots,n_t})$ by subtracting the first row from all other rows and then subtracting the first column from all other columns. Then the block form of the matrix $M(1|1)$ is given by
$$
\begin{small}
 \left[\begin{array}{c|c|c|c}
-4(J_{n_1-1} + I_{n_1-1}) & -4J_{(n_1-1) \times n_2} & \cdots & -4J_{(n_1-1) \times n_t}\\
\hline
-4J_{n_2 \times (n_1-1)} &  2J_{n_2}-4I_{n_2} & \cdots & -J_{n_2 \times n_t}\\
\hline
\vdots & \cdots  & \ddots &\vdots\\
\hline
-4J_{n_t \times (n_1-1)} & -J_{n_t \times n_2} &  \cdots &  2J_{n_t}-4I_{n_t}\\
\end{array} \right].
\end{small}
$$
First, for each partition of $M(1|1)$, we subtract the first column from all other columns, then add all the rows to the first row. Further, we shift the first column of each of the $t$-partitions to the first  $t$ columns and repeat the same operation for the rows. Then, the resulting matrix is of the following block form
\begin{small}
$$
\left[
\begin{array}{c|c}
\widetilde{C}_t& \mathbf{0} \\
\hline
* & -4I_{n-(t+1)}
\end{array}
\right], 
$$ where
$$
\widetilde{C}_t = \left[\begin{array}{ccccc}
-4n_1  & -4(n_1 - 1)& -4(n_1 - 1) & \cdots & -4(n_1 - 1)\\
-4n_2 & 2(n_2-2) & -n_2 &\cdots & -n_2\\
-4n_3 & -n_3 & 2(n_3-2) &\cdots & -n_3\\
\vdots & \vdots  &\vdots  & \ddots &\vdots\\
-4n_t & -n_t & -n_t &  \cdots & 2(n_t-2)\\
\end{array} \right].$$
\end{small}
Using the Proposition~\ref{prop:blockdet} and  Lemma~\ref{Lem:Det_3}, the result follows. 
\end{proof}
We now recall a result that gives a formula to obtain the determinant of $\Delta(K_{n_1,n_2,\cdots,n_t})$.
\begin{theorem}~\cite[Corollary~$3.5$]{JD3}\label{thm:detD_single}
Let $\Delta(K_{n_1,n_2,\cdots,n_t})$ be the squared distance matrix of complete $t$-partite graph $K_{n_1,n_2,\cdots,n_t}$ on $n=\sum_{i=1}^t n_i$ vertices. Then, the determinant of the squared distance matrix is given by
$$\det \Delta(K_{n_1,n_2,\cdots,n_t}) = (-4)^{n-t}\left[ \sum_{i=1}^t \left( n_i \prod_{j=1\atop j \neq i}^t (3n_j - 4) \right) +  \prod_{i=1}^t (3n_i - 4)\right].$$ 
\end{theorem}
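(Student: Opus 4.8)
The plan is to diagonalize $\Delta(K_{n_1,\dots,n_t})$ by exploiting its invariance under permuting vertices within each part, which makes it constant on the blocks of Eqn.~\eqref{eqn:D(K)}. Decompose $\R^n = W \oplus W^{\perp}$, where $W$ is the $t$-dimensional subspace of vectors that are constant on each part, spanned by the part-indicator vectors $e_1,\dots,e_t$ (here $e_i$ is the all-ones vector on part $i$ and zero elsewhere), and $W^{\perp}$ is the $(n-t)$-dimensional subspace of vectors whose restriction to each part sums to zero. These are orthogonal complements, and since $\Delta$ is symmetric it suffices to show that $W^\perp$ is $\Delta$-invariant to conclude that both subspaces are invariant and that $\det\Delta = \det(\Delta|_W)\,\det(\Delta|_{W^{\perp}})$.

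First I would handle $W^{\perp}$: for $v\in W^\perp$ the off-diagonal blocks $J_{n_k\times n_i}$ annihilate the zero-sum restriction $v_i$, while the diagonal block gives $4(J_{n_i}-I_{n_i})v_i = -4v_i$ since $J_{n_i}v_i = 0$. Hence $\Delta v = -4v$, so $-4$ is an eigenvalue of multiplicity $\dim W^\perp = \sum_{i=1}^t(n_i-1) = n-t$, contributing the factor $(-4)^{n-t}$.

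Next I would compute $\Delta|_W$ in the basis $\{e_i\}$. Applying $\Delta$ to $e_i$ produces the value $4(n_i-1)$ on part $i$ and $n_i$ on every other part, so the matrix of $\Delta|_W$ is the $t\times t$ matrix $B$ with $B_{ii}=4(n_i-1)$ and $B_{ki}=n_i$ for $k\neq i$. The key structural step is to write $B = D + \mathds{1}_t\,\mathbf{n}^t$, where $D=\mathrm{diag}(3n_1-4,\dots,3n_t-4)$ and $\mathbf{n}=(n_1,\dots,n_t)^t$, which is valid because $B_{ki}=n_i+(3n_i-4)\delta_{ki}$. This is a diagonal matrix plus a rank-one term, so the matrix determinant lemma gives
\[
\det B = \det(D)\bigl(1 + \mathbf{n}^t D^{-1}\mathds{1}_t\bigr) = \prod_{i=1}^t(3n_i-4) + \sum_{i=1}^t n_i\prod_{j\neq i}(3n_j-4).
\]
As this is a polynomial identity in the $n_i$, no invertibility assumption on $D$ is needed. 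Combining the two factors yields $\det\Delta = (-4)^{n-t}\det B$, which is exactly the claimed formula; note it is consistent with Theorem~\ref{thm:cof}, the two expressions differing only by the term $(-4)^{n-t}\prod_{i=1}^t(3n_i-4)$.

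None of the steps is genuinely hard; the point requiring the most care is the reduction to $B$, namely computing the action on the part-constant subspace without index errors and recognizing the diagonal-plus-rank-one form. An alternative route, closer in spirit to the proof of Theorem~\ref{thm:cof}, would be to run the same row-and-column reductions but on the full matrix rather than on $M(1\mid1)$, reducing $\det\Delta$ to $(-4)^{n-t}\det C_t$ for a $t\times t$ matrix $C_t$ as in Lemma~\ref{Lem:Det_3} but retaining the full diagonal; there the bookkeeping of the reductions would be the main obstacle, which is precisely why the eigenspace approach is cleaner.
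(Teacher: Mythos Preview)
Your proof is correct. The paper itself does not prove this theorem---it is quoted from \cite{JD3} as a known result---so there is no in-paper argument to compare against directly. Your eigenspace decomposition is clean: the verification that $W^\perp$ is the $(-4)$-eigenspace is immediate from the block form, the matrix $B$ of $\Delta|_W$ in the (non-orthonormal) basis $\{e_i\}$ is computed correctly, and since the determinant of a linear operator is basis-independent the factorization $\det\Delta=(-4)^{n-t}\det B$ is valid. The rank-one identification $B=D+\mathds{1}_t\,\mathbf{n}^t$ and the matrix determinant lemma finish it off; your remark that the resulting identity is polynomial in the $n_i$ is a good safeguard, though in fact $3n_i-4\neq 0$ for positive integers anyway.

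The closest in-paper analogue is the proof of Theorem~\ref{thm:cof}, which attacks the cofactor sum by explicit row and column operations on the block matrix and reduces to the $t\times t$ determinant of Lemma~\ref{Lem:Det_3}. Your alternative-route paragraph correctly identifies that the same reduction scheme, applied to $\Delta$ itself rather than to $M(1\mid 1)$, would reproduce the determinant formula. The eigenspace approach you chose avoids that bookkeeping entirely and makes the $(-4)^{n-t}$ factor transparent as a genuine eigenvalue multiplicity rather than an artifact of Gaussian elimination.
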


The following result discusses the cases in which the determinant and the sum of the cofactors of the squared distance matrix of $K_{n_1,n_2, \cdots, n_t}$ are zero. 

\begin{theorem}\label{thm:det-cof-zero}
Let $G$ be a complete $t$-partite graph $K_{n_1,n_2,\cdots,n_t}$ on $n=\sum_{i=1}^t n_i$ vertices and $\Delta(G)$ be the squared distance matrix of $G$. If $h=|\{i: n_i=1\}|$,  then the following results hold.
\begin{enumerate}
\item[$(i)$]  $\det \Delta(G)=0$ only if $\ds  \dfrac{t}{4}+ \dfrac{3}{4} < h \leq \dfrac{t}{2}+ \dfrac{1}{2}$. Furthermore, let $t=s+h$ such that  $n_i\geq 2$ for $1\leq i\leq s$ and $n_i=1$ for $s+1 \leq i\leq t=s+h$.  Then, $\det \Delta(G)=0$ if and only if  $h-1 = \ds\sum_{i=1}^s \dfrac{n_i}{3n_i-4}.$ 

\item[$(ii)$]$\cof \Delta(G)=0$ only if $\ds  \dfrac{t}{4} < h \leq \dfrac{t}{2}$.  Furthermore, let $t=s+h$ such that  $n_i\geq 2$ for $1\leq i\leq s$ and $n_i=1$ for $s+1 \leq i\leq t=s+h$. Then, $\cof \Delta(G)=0$ if and only if  $h= \ds\sum_{i=1}^s \dfrac{n_i}{3n_i-4}$.
\end{enumerate}
\end{theorem}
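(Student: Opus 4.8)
The plan is to reduce everything to the two closed-form expressions already in hand: Theorem~\ref{thm:detD_single} for $\det \Delta(G)$ and Theorem~\ref{thm:cof} for $\cof \Delta(G)$. Both carry the nonzero prefactor $(-4)^{n-t}$, so the vanishing of either quantity is equivalent to the vanishing of its bracketed sum. Writing $P = \prod_{i=1}^t(3n_i-4)$ and $S = \sum_{i=1}^t n_i\prod_{j\neq i}(3n_j-4)$, I would thus reduce the two claims to the scalar conditions $S = 0$ (for $\cof\Delta(G)=0$) and $S + P = 0$ (for $\det\Delta(G)=0$).

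The structural observation driving the proof is that $3n_i - 4 = -1$ when $n_i = 1$ and $3n_i - 4 > 0$ when $n_i \geq 2$. After relabeling so that $n_i \geq 2$ for $1 \leq i \leq s$ and $n_i = 1$ for $s+1 \leq i \leq t = s+h$, I would set $Q = \prod_{i=1}^s(3n_i-4) > 0$. Factoring out the singleton contributions gives $P = (-1)^h Q$, and splitting $S$ according to whether the index corresponds to a large part or a singleton yields, after dividing by $(-1)^hQ\neq 0$, the identities $S = (-1)^h Q\bigl[\sum_{i=1}^s \tfrac{n_i}{3n_i-4} - h\bigr]$ and $S + P = (-1)^h Q\bigl[\sum_{i=1}^s \tfrac{n_i}{3n_i-4} - h + 1\bigr]$. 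These immediately deliver the ``if and only if'' characterizations: $\cof\Delta(G)=0 \iff h = \sum_{i=1}^s \tfrac{n_i}{3n_i-4}$ and $\det\Delta(G)=0 \iff h - 1 = \sum_{i=1}^s \tfrac{n_i}{3n_i-4}$.

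For the necessary bounds on $h$, I would analyze the single-variable function $f(n) = \tfrac{n}{3n-4}$ on $n \geq 2$. The algebraic identity $f(n) = \tfrac13 + \tfrac{4/3}{3n-4}$ shows $f$ is strictly decreasing with $f(2) = 1$ and $f(n)\to \tfrac13$, so $\tfrac13 < f(n) \leq 1$ and hence $\tfrac{s}{3} < \sum_{i=1}^s f(n_i) \leq s$. Substituting these inequalities into the two characterizations and eliminating $s$ via $s = t - h$ converts each equality into a pair of inequalities in $h$; solving them yields $\tfrac{t}{4} < h \leq \tfrac{t}{2}$ in the cofactor case and $\tfrac{t}{4}+\tfrac34 < h \leq \tfrac{t}{2}+\tfrac12$ in the determinant case, which are exactly the stated ``only if'' conditions.

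I expect the only delicate step to be the sign bookkeeping when splitting $S$ into its singleton and non-singleton contributions and tracking the powers of $-1$; once the factorization through $Q$ is in place, the remainder is elementary inequality manipulation. It is worth checking the degenerate case $s = 0$ (the complete graph $K_t$) separately, as was done in the proof of Theorem~\ref{thm:cof}: there the empty sum forces $h = 1$ for $\det$ and $h = 0$ for $\cof$, matching the eigenvalues $t-1, -1, \dots, -1$ of $\Delta(K_t) = J_t - I_t$ and confirming consistency at the boundary.
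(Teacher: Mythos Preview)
Your proposal is correct and follows essentially the same route as the paper: substitute $n_i=1$ for the $h$ singletons into the closed formulas from Theorems~\ref{thm:cof} and~\ref{thm:detD_single}, factor out $(-1)^hQ$ with $Q=\prod_{i=1}^s(3n_i-4)>0$, read off the if-and-only-if conditions, and then bound $\tfrac{n_i}{3n_i-4}$ to obtain the range of $h$. The only cosmetic difference is that the paper rewrites $\tfrac{n_i}{3n_i-4}=\tfrac13+\tfrac{4}{3(3n_i-4)}$ and bounds $\sum_i\tfrac{1}{3n_i-4}\in(0,\tfrac{s}{2}]$, whereas you bound $f(n)=\tfrac{n}{3n-4}\in(\tfrac13,1]$ directly; these are algebraically the same inequality and lead to the identical ranges for $h$.
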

\begin{proof}
By Theorems~\ref{thm:cof} and ~\ref{thm:detD_single}, it is easy to see that $\det \Delta (G)$ and $\cof \Delta (G)$ are non-zero if $n_i\geq 2$ for $1\leq i \leq t$. Thus, $\det \Delta (G)$ and $\cof \Delta (G)$ are zero only if some of the $n_i$'s are one. Let $t=s+h$ and   $n_1,n_2,\cdots,n_t$ be positive integers  such that  $n_i\geq 2$ for $1\leq i\leq s$ and $n_i=1$ for $s+1 \leq i\leq t=s+h$.

Now substituting $n_i=1$ for $s+1 \leq i\leq t=s+h$ in  Theorem~\ref{thm:detD_single}, the determinant of the squared distance matrix $\Delta(G)$ is given by
\begin{align*}
\det \Delta(G) &= (-4)^{n-t}\left[ \sum_{i=1}^t \Bigg( n_i \prod_{j=1\atop j \neq i}^t (3n_j - 4) \Bigg) +  \prod_{i=1}^t (3n_i - 4)\right]\nonumber\\
               &= (-4)^{n-t}\left[ (-1)^h\sum_{i=1}^s \Bigg( n_i \prod_{j=1 \atop j \neq i}^s (3n_j - 4) \Bigg) + (-1)^{h-1} h \prod_{i=1}^s (3n_i - 4)+ (-1)^h \prod_{i=1}^s (3n_i - 4)\right]\nonumber\\
               &=(-4)^{n-t} (-1)^{h-1} \left[ (h-1)\prod_{i=1}^s (3n_i - 4) - \sum_{i=1}^s \Bigg( n_i  \prod_{j=1\atop j \neq i}^s (3n_j - 4) \Bigg)\right]\nonumber\\
               &=(-4)^{n-t} (-1)^{h-1} \left[ (h-1)- \sum_{i=1}^s \frac{n_i}{3n_i - 4}\right] \prod_{i=1}^s (3n_i - 4).
\end{align*}
 Since $n_i\geq 2$ for $1\leq i\leq s$, so  $\prod_{i=1}^s (3n_i - 4)>0.$  Thus $\det \Delta(G)=0$ if and only if  $h-1 = \ds\sum_{i=1}^s \dfrac{n_i}{3n_i-4}$.   Further,  $h-1 = \ds\sum_{i=1}^s \dfrac{n_i}{3n_i-4}  = \dfrac{s}{3} + \dfrac{4}{3} \ds\sum_{i=1}^s \dfrac{1}{3n_i-4}= \dfrac{t-h}{3} + \dfrac{4}{3} \ds\sum_{i=1}^s \dfrac{1}{3n_i-4}$, which implies that  $ \ds\sum_{i=1}^s \dfrac{1}{3n_i-4}=h-\dfrac{t+3}{4}$.  Using $n_i\geq 2$ for $1\leq i\leq s$, we have $ 0<\ds\sum_{i=1}^s \dfrac{1}{3n_i-4} \leq \dfrac{s}{2}=\dfrac{t-h}{2}$. Therefore, $0< h-\dfrac{t+3}{4} \leq \dfrac{t-h}{2}$ and hence $\dfrac{t}{4}+ \dfrac{3}{4} < h \leq \dfrac{t}{2}+ \dfrac{1}{2}.$ This proves part $(i)$.
 
Next, substituting $n_i=1$ for $s+1 \leq i\leq t=s+h$ in  Theorem~\ref{thm:cof}, the sum of the cofactors  of the squared distance matrix $\Delta(G)$ is given by 
\begin{align*}\label{eqn:cof-h-1}
\cof \Delta(G) &= (-4)^{n-t}\left[ (-1)^h\sum_{i=1}^s \Bigg( n_i  \prod_{j=1 \atop j \neq i}^s (3n_j - 4) \Bigg) + (-1)^{h-1} h \prod_{i=1}^s (3n_i - 4)\right]\nonumber\\
 &=(-4)^{n-t} (-1)^{h-1} \left[ h- \sum_{i=1}^s \frac{n_i}{3n_i - 4}\right] \prod_{i=1}^s (3n_i - 4).
 \end{align*}
Since $n_i\geq 2$ for $1\leq i\leq s$, $\cof \Delta(G)=0$ if and only if  $h= \ds\sum_{i=1}^s \dfrac{n_i}{3n_i-4}$. Moreover, arguing similar to part $(i)$,   $h= \ds\sum_{i=1}^s \dfrac{n_i}{3n_i-4}$ yields that  $ \ds\sum_{i=1}^s \dfrac{1}{3n_i-4}=h-\dfrac{t}{4}$. Using $n_i\geq 2$ for $1\leq i\leq s$, we have $ 0<\ds\sum_{i=1}^s \dfrac{1}{3n_i-4} \leq \dfrac{s}{2}=\dfrac{t-h}{2}$ implies that  $0< h-\dfrac{t}{4} \leq \dfrac{t-h}{2}$ and hence $\dfrac{t}{4} < h \leq \dfrac{t}{2}.$ This completes the proof.
\end{proof}

\section{Inverse of $\Delta(K_{n_1,n_2,\cdots,n_t})$}\label{sec:inverse}
In this section, we first find the inverse of the squared distance matrix $\Delta(K_{n_1,n_2,\cdots,n_t})$ as a rank one perturbation of a Laplacian-like matrix subject to the condition that  $\cof \Delta(K_{n_1,n_2,\cdots,n_t})\neq 0.$ Notice that by~\cite[Lemma $4.13$]{JD1}, it is known that  $\cof \Delta(K_{n_1,n_2,\cdots,n_t})\neq 0$    is a necessary condition to present the inverse of $\Delta(K_{n_1,n_2,\cdots,n_t})$ (whenever it exists) as a rank one perturbation of a matrix.   Next, whenever it exists, we provide a formula for the inverse of  $\Delta(K_{n_1,n_2,\cdots,n_t})$. Before proceeding further, we introduce a few notations useful for the subsequent results. Let $n_i\in \mathbb{N}$ for $1\leq i\leq t$ and  $t\geq 2$, let us denote 
\begin{equation}\label{eqn:phi}
\hspace*{-2 cm}\begin{cases}
\Phi_{n_1,n_2,\cdots,n_t}= \ds \prod_{i=1}^t (3n_i-4),\\
\Phi_{\widehat{n_i}}= \Phi_{n_1, n_2,\cdots n_{i-1},n_{i+1},\cdots, n_t}  = \ds \prod_{k=1\atop k\neq i}^t (3n_k-4),\\
\Phi_{\widehat{n_i,n_j}}= \Phi_{n_1, n_2,\cdots n_{i-1},n_{i+1},\cdots, n_{j-1},n_{j+1},\cdots,  n_t}  = \ds \prod_{k=1\atop k\neq i,j}^t (3n_k-4),\\
\end{cases}
\end{equation}

\begin{equation}\label{eqn:psi}
\begin{cases}
\Psi_{n_1,n_2,\cdots,n_t}= \ds  \sum_{i=1}^t \left(n_i \prod_{ j=1\atop j\neq i}^t (3n_j-4)\right),\\
\Psi_{\widehat{n_i}}= \Psi_{n_1, n_2,\cdots n_{i-1},n_{i+1},\cdots, n_t}  = \ds  \sum_{k=1\atop k\neq i}^t \left(n_k \prod_{ l=1 \atop l\neq i}^t (3n_l-4)\right),\\
\Psi_{\widehat{n_i,n_j}}= \Psi_{n_1, n_2,\cdots n_{i-1},n_{i+1},\cdots, n_{j-1},n_{j+1},\cdots,  n_t}  = \ds  \sum_{k=1\atop k\neq i,j}^t \left(n_k \prod_{ l=1 \atop l\neq i,j}^t (3n_l-4)\right),\\
\end{cases}
\end{equation}
and
\begin{equation}\label{eqn:theta}
\hspace*{-5 cm} \begin{cases}
\Theta_{n_1,n_2,\cdots,n_t}= \Phi_{n_1,n_2,\cdots,n_t}+ \Psi_{n_1,n_2,\cdots,n_t},\\
\Theta_{\widehat{n_i}}= \Phi_{\widehat{n_i}}+ \Psi_{\widehat{n_i}}.
\end{cases}
\end{equation}

Next, we state a few identities based on the notations from  Eqns.~\eqref{eqn:phi} -~\eqref{eqn:theta} and  are useful for our subsequent calculations. The proofs of these identities are mostly computational and hence omitted (for details of the proofs see Appendix~\ref{apn_1}).
\begin{lem}\label{lem:identities}
Let $n_i\in \mathbb{N}$ for $1\leq i\leq t$ and  $t\geq 2$, we have the following identities.
\begin{enumerate}
\item[(a)] $\ds \Theta_{n_1,n_2,\cdots,n_t}=  (3n_i-4)\Phi_{\widehat{n_i}} + \sum_{k=1}^t n_k \Phi_{\widehat{n_k}}.$

\item [(b)] $  \ds \Theta_{n_1,n_2,\cdots,n_t}=  (3n_i-4)\Theta_{\widehat{n_i}}+ n_i \Phi_{\widehat{n_i}}.$

\item [(c)] $\ds \Psi_{n_1,n_2,\cdots,n_t}=  (3n_i-4)\Psi_{\widehat{n_i}} +  n_i \Phi_{\widehat{n_i}}. $

\item [(d)] $\ds \Psi_{\widehat{n_i}}
=\sum_{k=1\atop k\neq i}^t n_k  \Phi_{\widehat{n_i,n_k}} =(3n_j-4)\Psi_{\widehat{n_i,n_j}}+n_j \Phi_{\widehat{n_i,n_j}}.$
\end{enumerate}
\end{lem}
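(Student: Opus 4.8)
The plan is to prove each of the four identities directly from the product and sum definitions in Eqns.~\eqref{eqn:phi}--\eqref{eqn:theta}, using the single unifying observation that each quantity with a ``hat'' is obtained by \emph{removing} one factor $(3n_i-4)$ from the corresponding unhatted product. The key algebraic fact I would isolate first is the factorization $\Phi_{n_1,n_2,\cdots,n_t} = (3n_i-4)\,\Phi_{\widehat{n_i}}$, valid for any fixed index $i$, which follows immediately since $\Phi_{\widehat{n_i}}$ is exactly $\Phi_{n_1,n_2,\cdots,n_t}$ with the $i$-th factor deleted. This single relation, together with the analogous splitting of the $\Psi$-sum, will drive all four parts.

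For part (c), I would split the defining sum $\Psi_{n_1,n_2,\cdots,n_t}=\sum_{k=1}^t\bigl(n_k\prod_{j\neq k}(3n_j-4)\bigr)$ into the $k=i$ term and the remaining $t-1$ terms. The $k=i$ term is precisely $n_i\,\Phi_{\widehat{n_i}}$. For each term with $k\neq i$, the product $\prod_{j\neq k}(3n_j-4)$ contains the factor $(3n_i-4)$, so I can pull $(3n_i-4)$ out of the entire residual sum, and what remains is exactly $\sum_{k\neq i}\bigl(n_k\prod_{j\neq i,k}(3n_j-4)\bigr)=\Psi_{\widehat{n_i}}$. This yields $\Psi_{n_1,n_2,\cdots,n_t}=(3n_i-4)\Psi_{\widehat{n_i}}+n_i\Phi_{\widehat{n_i}}$. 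Part (b) then follows immediately by adding the factorization $\Phi_{n_1,n_2,\cdots,n_t}=(3n_i-4)\Phi_{\widehat{n_i}}$ to the identity of part (c) and invoking the definition $\Theta=\Phi+\Psi$ and $\Theta_{\widehat{n_i}}=\Phi_{\widehat{n_i}}+\Psi_{\widehat{n_i}}$. For part (a), I would again start from $\Theta=\Phi+\Psi$; writing $\Phi_{n_1,n_2,\cdots,n_t}=(3n_i-4)\Phi_{\widehat{n_i}}$ and expanding $\Psi_{n_1,n_2,\cdots,n_t}=\sum_{k=1}^t n_k\Phi_{\widehat{n_k}}$ directly from its definition gives $\Theta_{n_1,n_2,\cdots,n_t}=(3n_i-4)\Phi_{\widehat{n_i}}+\sum_{k=1}^t n_k\Phi_{\widehat{n_k}}$ as stated.

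Part (d) is the two-index version of part (c): the first equality $\Psi_{\widehat{n_i}}=\sum_{k\neq i}n_k\Phi_{\widehat{n_i,n_k}}$ is just the defining expansion of $\Psi_{\widehat{n_i}}$ as a sum over the $t-1$ surviving indices, recognizing that $\Phi_{\widehat{n_i,n_k}}$ is the product over indices other than $i$ and $k$. The second equality is obtained by applying the reasoning of part (c) to $\Psi_{\widehat{n_i}}$ itself, now peeling off the index $j$ (with $j\neq i$): splitting off the $k=j$ term gives $n_j\Phi_{\widehat{n_i,n_j}}$, and factoring $(3n_j-4)$ out of the rest gives $(3n_j-4)\Psi_{\widehat{n_i,n_j}}$.

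None of these steps presents a genuine obstacle; the work is purely bookkeeping. The only point requiring care is notational consistency in the paper's own definitions: for instance, the displayed definition of $\Psi_{\widehat{n_i}}$ in Eqn.~\eqref{eqn:psi} writes the inner product as $\prod_{l\neq i}(3n_l-4)$ rather than $\prod_{l\neq i,k}$, so I would verify that the intended reading (a single factor per summand is suppressed by the hat, consistent with part (d)) is the one used, and present the argument accordingly. Since these are the computational identities the authors explicitly defer to Appendix~\ref{apn_1}, I expect the cleanest exposition to state the master factorization $\Phi_{n_1,n_2,\cdots,n_t}=(3n_i-4)\Phi_{\widehat{n_i}}$ once and then derive (c), (b), (a), (d) in that dependency order.
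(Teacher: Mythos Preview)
Your proposal is correct and matches the paper's own argument in Appendix~\ref{apn_1}: both proceed by direct expansion from the definitions in Eqns.~\eqref{eqn:phi}--\eqref{eqn:theta}, using the factorization $\Phi_{n_1,\dots,n_t}=(3n_i-4)\Phi_{\widehat{n_i}}$ and splitting the $\Psi$-sum at the index $i$ (or $j$). Your suggestion to derive (b) by adding the $\Phi$-factorization to (c) is a slight streamlining over the paper, which redoes the splitting from scratch for (b), but the substance is identical; your remark about the apparent typo in the displayed definition of $\Psi_{\widehat{n_i}}$ is also well taken.
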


Let $G = (V,E)$ be a complete $t$-partite graph $K_{n_1,n_2,\cdots,n_t}$  on $n=\sum_{i=1}^t n_i$ vertices such that the vertex $V$ is partitioned into $t$ subsets  $V_i$ for $1 \leq i \leq t$ and $|V_i| = n_i$. Let $\Delta(G)$ be the squared distance matrix of $G$.  Using notations in  Eqns.~\eqref{eqn:phi} -~\eqref{eqn:theta}, by  Theorem~\ref{thm:detD_single}, we have $\det \Delta(G)=0$ if and only if $\Theta_{n_1,n_2,\cdots,n_t} =0$ and by Theorem~\ref{thm:cof}, we have  $\cof \Delta(G)=0$ if and only if $\Psi_{n_1,n_2,\cdots,n_t} =0$. With the above notations and observations,  we define a few parameters useful to find the inverse of $\Delta(G)$. If $\cof \Delta(G) \neq 0$, then we define the constant and an $n$-dimensional column vector $\nu$ as follows: 
\begin{equation}\label{eqn:lambda_sq_multi1}
\ds \lambda = \frac{\det \Delta(G)}{\cof \Delta(G)}=\frac{\Theta_{n_1,n_2,\cdots,n_t}} {\Psi_{n_1,n_2,\cdots,n_t}},
\end{equation} 
and   
\begin{equation}\label{eqn:nu_sq_multi}
\nu(v) = \frac{1}{\Psi_{n_1,n_2,\cdots,n_t}} \left( \sum_{i=1}^t \sum_{v \in V_i} \Phi_{\widehat{n_i}} \right), \text{ where } v \in V.
\end{equation}
Next,  we define  the Laplacian-like matrix  $\mathcal{L}$ satisfying  $\mathcal{L}\mathds{1} = \mathbf{0}$ and $\mathds{1}^t \mathcal{L} = \mathbf{0}$ with respect to the squared distance matrix $\Delta(G)$ with $\cof \Delta(G) \neq 0$ as follows: Let $\mathcal{L} = [\mathcal{L}_{uv}]_{u,v \in V}$, where 
\begin{equation}\label{eqn:lap_like_sq_multi}
\mathcal{L}_{uv}=
\begin{cases}\vspace*{.28cm}
\dfrac{1}{2\Psi_{n_1,n_2,\cdots,n_t}}\left[ \dfrac{n_i-1}{2} \Theta_{\widehat{n_i}} +(n_i-3) \Psi_{\widehat{n_i}}\right]=a_i & \text{ if } u=v \text{ and } u,v \in V_i,\\\vspace*{.28cm}
-\dfrac{1}{2\Psi_{n_1,n_2,\cdots,n_t}}\left[\dfrac{1}{2} \Theta_{\widehat{n_i}} +\Psi_{\widehat{n_i}}\right] = b_i & \text{ if } u \neq v \text{ and } u,v \in V_i,\\
\dfrac{1}{\Psi_{n_1,n_2,\cdots,n_t}}\Phi_{\widehat{n_i,n_j}} = c_{ij} & \text{ if } u \neq v, u \in V_i \text{ and } v \in V_j.
\end{cases}
\end{equation}
 In view of Eqns.~\eqref{eqn:D(K)} and \eqref{eqn:lap_like_sq_multi},  the product of matrices $\mathcal{L}$ and $\Delta(G)$ is given by  $\mathcal{L}\Delta(G)=[(\mathcal{L}\Delta(G))_{uv}]_{u,v \in V}$, where
\begin{equation}\label{eqn:L_Delta}
(\mathcal{L}\Delta(G))_{uv}=
\begin{cases}\vspace*{.28cm}
4(n_i-1)b_i + \ds \sum_{k=1\atop k\neq i}^t n_k c_{ik} & \text{ if } u=v \text{ and } u,v \in V_i,\\\vspace*{.28cm}
4a_i + 4(n_i-2)b_i+ \ds \sum_{k=1\atop k\neq i}^t n_k c_{ik} & \text{ if } u \neq v \text{ and } u,v \in V_i,\\
a_i + (n_i-1)b_i + 4(n_j-1)c_{ij} +\ds  \sum_{k=1\atop k\neq i,j}^tn_k c_{ik} & \text{ if } u \neq v, u \in V_i \text{ and } v \in V_j.
\end{cases}
\end{equation}

\begin{lem}\label{lem:sq-nu-1}
Let $G$ be a complete $t$-partite graph $K_{n_1,n_2,\cdots,n_t}$.   Let $\Delta(G)$ be the squared distance matrix of $G$ and $\cof \Delta(G) \neq 0$. Then $\Delta(G)\nu = \lambda \mathds{1}$, where $\lambda$ and $\nu$ is as defined in Eqns.~\eqref{eqn:lambda_sq_multi1} and~\eqref{eqn:nu_sq_multi}, respectively.
\end{lem}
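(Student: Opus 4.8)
The plan is to exploit the block structure of $\Delta(G)$ in Eqn.~\eqref{eqn:D(K)} together with the observation that $\nu$ is constant on each part. First I would read off from Eqn.~\eqref{eqn:nu_sq_multi} that $\nu$ takes the single value $\nu_i = \Phi_{\widehat{n_i}}/\Psi_{n_1,n_2,\cdots,n_t}$ on every vertex of the part $V_i$. Since $\lambda \mathds{1}$ is also constant, it suffices to show that the $u$-th entry of $\Delta(G)\nu$ equals $\lambda$ for each $u$; by the block-constancy of $\nu$ this entry depends only on the part $V_i$ containing $u$, so I only need to verify equality part by part.

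Next I would compute $(\Delta(G)\nu)_u$ for a vertex $u \in V_i$ directly from the block form. The diagonal entry of the $i$-th diagonal block $4(J_{n_i}-I_{n_i})$ contributes nothing; the $n_i-1$ remaining within-part entries each equal $4$ and carry the weight $\nu_i$; and for each other part $V_j$ the $n_j$ cross entries (from $J_{n_i\times n_j}$) each equal $1$ and carry the weight $\nu_j$. This gives
$$(\Delta(G)\nu)_u = 4(n_i-1)\nu_i + \sum_{j=1\atop j\neq i}^t n_j \nu_j.$$

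Then I would clear the common factor $1/\Psi_{n_1,n_2,\cdots,n_t}$ and rewrite the cross-part sum as $\sum_{k=1}^t n_k \Phi_{\widehat{n_k}} - n_i \Phi_{\widehat{n_i}}$, so that the total coefficient of $\Phi_{\widehat{n_i}}$ becomes $4(n_i-1) - n_i = 3n_i - 4$. This yields
$$(\Delta(G)\nu)_u = \frac{1}{\Psi_{n_1,n_2,\cdots,n_t}}\left[(3n_i-4)\Phi_{\widehat{n_i}} + \sum_{k=1}^t n_k \Phi_{\widehat{n_k}}\right].$$
At this point the bracketed expression is exactly the right-hand side of Lemma~\ref{lem:identities}(a), hence equals $\Theta_{n_1,n_2,\cdots,n_t}$, so the entry equals $\Theta_{n_1,n_2,\cdots,n_t}/\Psi_{n_1,n_2,\cdots,n_t} = \lambda$ by Eqn.~\eqref{eqn:lambda_sq_multi1}. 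Since $i$ (and hence $u$) was arbitrary, this holds for every vertex, giving $\Delta(G)\nu = \lambda\mathds{1}$.

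As for the main obstacle: there is no genuine analytic difficulty here, as the whole argument is a short bookkeeping computation built on the block structure. The only place requiring care is the collapse of the coefficient $4(n_i-1)-n_i$ together with the full sum $\sum_{k} n_k \Phi_{\widehat{n_k}}$ into $\Theta_{n_1,n_2,\cdots,n_t}$; recognizing Lemma~\ref{lem:identities}(a) as precisely the identity that performs this collapse is the crux, and it also clarifies \emph{why} $\lambda$ was defined as $\Theta_{n_1,n_2,\cdots,n_t}/\Psi_{n_1,n_2,\cdots,n_t}$ in the first place.
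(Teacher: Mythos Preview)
Your proposal is correct and follows essentially the same approach as the paper: compute $(\Delta(G)\nu)_u$ for $u\in V_i$ from the block form to obtain $4(n_i-1)\nu_i + \sum_{k\neq i} n_k \nu_k$, collapse $4(n_i-1)-n_i$ into $3n_i-4$, and invoke Lemma~\ref{lem:identities}(a) to identify the result with $\Theta_{n_1,\ldots,n_t}/\Psi_{n_1,\ldots,n_t}=\lambda$. The paper's proof is the same computation, presented with the same intermediate expression.
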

\begin{proof}
Let $G = (V,E)$ be the complete $t$-partite graph $K_{n_1,n_2,\cdots,n_t}$  on $n=\sum_{i=1}^t n_i$ vertices such that the vertex $V$ is partitioned into $t$ subsets  $V_i$ for $1 \leq i \leq t$ and $|V_i| = n_i$. Let $\eta=\Delta(G)\nu$. We will show  $\eta(v)=\lambda$ for all $v\in V.$ For $v\in V_i$, we have
\begin{align*} 
\eta(v)&= 4(n_i-1)\dfrac{\Phi_{\widehat{n_i}}}{\Psi_{n_1,n_2,\cdots,n_t}}+\sum_{k=1\atop k \neq i}^t n_k \dfrac{\Phi_{\widehat{n_k}}}{\Psi_{n_1,n_2,\cdots,n_t}}\\
&= \dfrac{1}{\Psi_{n_1,n_2,\cdots,n_t}} \left[ (3n_i-4)\Phi_{\widehat{n_i}} + \sum_{k=1}^t n_k \Phi_{\widehat{n_k}}  \right]\\
&= \frac{\Theta_{n_1,n_2,\cdots,n_t}} {\Psi_{n_1,n_2,\cdots,n_t}}=\lambda.
\end{align*} 
This completes the proof.
\end{proof}

\begin{lem}\label{lem:Lsq+I=nu}
Let $G$ be a complete $t$-partite graph $K_{n_1,n_2,\cdots,n_t}$. Let $\Delta(G)$ be the squared distance matrix of $G$ and $\cof \Delta(G) \neq 0$. If  $\mathcal{L}$ is the Laplacian-like matrix  defined in Eqn.~\eqref{eqn:lap_like_sq_multi},  then $\mathcal{L}\Delta(G) + I= \nu \mathds{1}^t$, where $\nu$ is as defined in Eqn.~\eqref{eqn:nu_sq_multi}.
\end{lem}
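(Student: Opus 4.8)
The plan is to verify the matrix equality entrywise. Writing $\Psi$ for $\Psi_{n_1,n_2,\cdots,n_t}$, I first note that $\nu\mathds{1}^t$ is the rank-one matrix whose $(u,v)$ entry is $\nu(u)$, hence constant along each row: if $u\in V_i$, then $(\nu\mathds{1}^t)_{uv}=\Phi_{\widehat{n_i}}/\Psi$ for every $v\in V$. Since $\mathcal{L}\Delta(G)$ is already resolved block-wise in Eqn.~\eqref{eqn:L_Delta}, and $I$ contributes $1$ on the diagonal and $0$ off it, the lemma reduces to checking that, for $u\in V_i$, each of the three expressions in Eqn.~\eqref{eqn:L_Delta} (augmented by the appropriate entry of $I$) collapses to the single $v$-independent value $\Phi_{\widehat{n_i}}/\Psi$.

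The common preliminary step is to evaluate the sums of off-diagonal entries $c_{ik}=\Phi_{\widehat{n_i,n_k}}/\Psi$. Invoking the first equality in Lemma~\ref{lem:identities}(d) gives $\sum_{k\neq i}n_k c_{ik}=\Psi_{\widehat{n_i}}/\Psi$, and retaining only $k\neq i,j$ yields $\sum_{k\neq i,j}n_k c_{ik}=(\Psi_{\widehat{n_i}}-n_j\Phi_{\widehat{n_i,n_j}})/\Psi$. I would then handle the three cases by inserting the definitions of $a_i$ and $b_i$ from Eqn.~\eqref{eqn:lap_like_sq_multi}, clearing the factor $1/\Psi$, and grouping terms by $\Theta_{\widehat{n_i}}$, $\Psi_{\widehat{n_i}}$, $\Phi_{\widehat{n_i}}$ and $\Phi_{\widehat{n_i,n_j}}$, repeatedly using $\Theta_{\widehat{n_i}}=\Phi_{\widehat{n_i}}+\Psi_{\widehat{n_i}}$ from Eqn.~\eqref{eqn:theta}.

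Concretely: for the diagonal entry ($u=v\in V_i$), after expanding $\Theta_{\widehat{n_i}}$ the coefficient of $\Psi_{\widehat{n_i}}$ becomes $-(3n_i-4)$, which is absorbed by Lemma~\ref{lem:identities}(c) rewritten as $\Psi-(3n_i-4)\Psi_{\widehat{n_i}}=n_i\Phi_{\widehat{n_i}}$, leaving exactly $\Phi_{\widehat{n_i}}$. For the off-diagonal same-block entry ($u\neq v$, $u,v\in V_i$) the $\Theta_{\widehat{n_i}}$ and $\Psi_{\widehat{n_i}}$ terms recombine immediately through $\Theta_{\widehat{n_i}}-\Psi_{\widehat{n_i}}=\Phi_{\widehat{n_i}}$. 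For the mixed entry ($u\in V_i$, $v\in V_j$) the $\Psi_{\widehat{n_i}}$ contributions cancel, the surviving coefficient of $\Phi_{\widehat{n_i,n_j}}$ reduces to $4(n_j-1)-n_j=3n_j-4$, and $(3n_j-4)\Phi_{\widehat{n_i,n_j}}=\Phi_{\widehat{n_i}}$ (immediate from the definition in Eqn.~\eqref{eqn:phi}) finishes the computation.

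I expect no conceptual obstacle here; the work is purely organizational. The one genuinely delicate point is that the three structurally distinct rows of Eqn.~\eqref{eqn:L_Delta} must all flatten to the same value, so the $+1$ supplied by $I$ on the diagonal has to conspire exactly with the recurrence of Lemma~\ref{lem:identities}(c) to reproduce what the off-diagonal entries already yield without it. Keeping the coefficients of $\Theta_{\widehat{n_i}}$, $\Psi_{\widehat{n_i}}$ and $\Phi_{\widehat{n_i}}$ straight through these substitutions is where an arithmetic slip is most likely.
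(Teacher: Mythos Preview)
Your proposal is correct and follows essentially the same route as the paper: an entrywise verification split into the three cases of Eqn.~\eqref{eqn:L_Delta}, reducing each to $\Phi_{\widehat{n_i}}/\Psi_{n_1,\dots,n_t}$ via the identities of Lemma~\ref{lem:identities} and the relation $\Theta_{\widehat{n_i}}=\Phi_{\widehat{n_i}}+\Psi_{\widehat{n_i}}$. The only cosmetic difference is in Case~3, where the paper invokes the second equality of Lemma~\ref{lem:identities}(d) to introduce $\Psi_{\widehat{n_i,n_j}}$ explicitly, whereas you bypass it by writing $\sum_{k\neq i,j}n_k c_{ik}=(\Psi_{\widehat{n_i}}-n_j\Phi_{\widehat{n_i,n_j}})/\Psi$ directly; both unwind to the same $(3n_j-4)\Phi_{\widehat{n_i,n_j}}=\Phi_{\widehat{n_i}}$ step.
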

\begin{proof}
Let $G = (V,E)$ be the complete $t$-partite graph $K_{n_1,n_2,\cdots,n_t}$  on $n=\sum_{i=1}^t n_i$ vertices such that the vertex set $V$ is partitioned into $t$ subsets  $V_i$ for $1 \leq i \leq t$ and $|V_i| = n_i$. We will use Eqns.~\eqref{eqn:phi} -~\eqref{eqn:theta}, identities of Lemma~\ref{lem:identities}, Eqn.~\eqref{eqn:L_Delta}, and  consider the following cases to complete the proof.\\

\noindent \underline{\textbf{Case 1:}} Let $u=v$, where $u,v \in V_i$ for $1\leq i\leq t$. 
\begin{flalign*}
\hspace*{1cm}(\mathcal{L}\Delta(G) + I)_{uv} &= 1 + 4(n_i-1)b_i + \sum_{k=1\atop k\neq i}^t n_k c_{ik}&&\\
&=1-\dfrac{n_i-1}{\Psi_{n_1,n_2,\cdots,n_t}}\left[ \Theta_{\widehat{n_i}} +2 \Phi_{\widehat{n_i}}\right]+ \dfrac{\Psi_{\widehat{n_i}}}{\Psi_{n_1,n_2,\cdots,n_t}}&&\\
&=1-\dfrac{(n_i-1)\left[ 3 \Psi_{\widehat{n_i}}+\Phi_{\widehat{n_i}} \right] + \Psi_{\widehat{n_i}}}{\Psi_{n_1,n_2,\cdots,n_t}}&&\\
&=1-\dfrac{\left[ (3n_i-4) \Psi_{\widehat{n_i}}+n_i\Phi_{\widehat{n_i}} \right] - \Phi_{\widehat{n_i}}}{\Psi_{n_1,n_2,\cdots,n_t}}&&\\
&=1-\dfrac{\Psi_{n_1,n_2,\cdots,n_t} - \Phi_{\widehat{n_i}}}{\Psi_{n_1,n_2,\cdots,n_t}}= \dfrac{\Phi_{\widehat{n_i}}}{\Psi_{n_1,n_2,\cdots,n_t}}=\nu(u).&&
\end{flalign*}

\noindent \underline{\textbf{Case 2:}} Let $u \neq v,$ where $u,v \in V_i$ for $1\leq i\leq t$. 
\begin{flalign*}
\hspace*{1cm}(\mathcal{L}\Delta(G) + I)_{uv} &= 4a_i + 4(n_i-2)b_i+ \sum_{k=1\atop k\neq i}^t n_k c_{ik}&&\\
&=\dfrac{(n_i-1)\Theta_{\widehat{n_i}}+2(n_i-3)\Psi_{\widehat{n_i}}}{\Psi_{n_1,n_2,\cdots,n_t}}- \dfrac{(n_i-2)\Theta_{\widehat{n_i}}+2(n_i-2)\Psi_{\widehat{n_i}}}{\Psi_{n_1,n_2,\cdots,n_t}}+ \dfrac{\Psi_{\widehat{n_i}}}{\Psi_{n_1,n_2,\cdots,n_t}}&&\\
&=\dfrac{\Theta_{\widehat{n_i}}+[2(n_i-3)-2(n_i-2)+1]\Psi_{\widehat{n_i}}}{\Psi_{n_1,n_2,\cdots,n_t}}&&\\
&=\dfrac{\Theta_{\widehat{n_i}}-\Psi_{\widehat{n_i}}}{\Psi_{n_1,n_2,\cdots,n_t}} = \dfrac{\Phi_{\widehat{n_i}}}{\Psi_{n_1,n_2,\cdots,n_t}}=\nu(u).&&
\end{flalign*}

\noindent \underline{\textbf{Case 3:}} Let $ u \in V_i \text{ and } v \in V_j$ for $1\leq i,j\leq t$ with $i\neq j$.
\begin{flalign*}
\hspace*{1cm}(\mathcal{L}\Delta(G) + I)_{uv} &= a_i + (n_i-1)b_i + 4(n_j-1)c_{ij} + \sum_{k=1\atop k\neq i,j}^tn_k c_{ik}&&\\
&=\dfrac{(n_i-1)\Theta_{\widehat{n_i}}+ 2(n_i-3)\Psi_{\widehat{n_i}}}{4\Psi_{n_1,n_2,\cdots,n_t}}-
\dfrac{(n_i-1)\Theta_{\widehat{n_i}}+ 2(n_i-1)\Psi_{\widehat{n_i}}}{4\Psi_{n_1,n_2,\cdots,n_t}}&&\\
&\hspace*{6cm}+\dfrac{4(n_j-1)\Phi_{\widehat{n_i,n_j}}}{\Psi_{n_1,n_2,\cdots,n_t}}+\dfrac{(3n_j-4)\Psi_{\widehat{n_i,n_j}}}{\Psi_{n_1,n_2,\cdots,n_t}}&&\\
&= -\dfrac{\Psi_{\widehat{n_i}}}{\Psi_{n_1,n_2,\cdots,n_t}}+\dfrac{4(n_j-1)\Phi_{\widehat{n_i,n_j}}}{\Psi_{n_1,n_2,\cdots,n_t}}+\dfrac{(3n_j-4)\Psi_{\widehat{n_i,n_j}}}{\Psi_{n_1,n_2,\cdots,n_t}}&&\\
&= -\dfrac{\Psi_{\widehat{n_i}}}{\Psi_{n_1,n_2,\cdots,n_t}}+\dfrac{\left[(3n_j-4)\Psi_{\widehat{n_i,n_j}}+n_j \Phi_{\widehat{n_i,n_j}}\right] +(3n_j-4)\Phi_{\widehat{n_i,n_j}}}{\Psi_{n_1,n_2,\cdots,n_t}}&&\\
&= -\dfrac{\Psi_{\widehat{n_i}}}{\Psi_{n_1,n_2,\cdots,n_t}}+\dfrac{\Psi_{\widehat{n_i}} +\Phi_{\widehat{n_i}}}{\Psi_{n_1,n_2,\cdots,n_t}} = \dfrac{\Phi_{\widehat{n_i}}}{\Psi_{n_1,n_2,\cdots,n_t}}=\nu(u).&&
\end{flalign*}
This completes the proof.
\end{proof}

We now show that if $\Delta(K_{n_1,n_2,\cdots,n_t})$ is invertible, then we find the inverse of the squared distance matrix $\Delta(K_{n_1,n_2,\cdots,n_t})$ as a rank one perturbation of a Laplacian-like matrix $\mathcal{L}$  subject to the condition that  $\cof \Delta(K_{n_1,n_2,\cdots,n_t})\neq 0$.

\begin{theorem}\label{thm:Delta_inv}
Let $G$ be the complete $t$-partite graph $K_{n_1,n_2,\cdots,n_t}$ and  $\Delta(G)$ be the squared distance matrix of $G$. If $\det \Delta(G) \neq 0$ and $\cof \Delta(G) \neq 0$, then $$\Delta(G)^{-1} = - \mathcal{L} + \frac{1}{\lambda} \nu \nu^t,$$ 
where $\mathcal{L}$ is the Laplacian-like matrix  defined in Eqn.~\eqref{eqn:lap_like_sq_multi}, $\lambda$ and $\nu$ is as defined in Eqns.~\eqref{eqn:lambda_sq_multi1} and~\eqref{eqn:nu_sq_multi}, respectively.
\end{theorem}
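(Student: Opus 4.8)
The plan is to exploit the two preceding lemmas, which already encode all the structural information, and combine them by a single one-sided verification. Since $\det \Delta(G) \neq 0$, the matrix $\Delta(G)$ is invertible, so it suffices to show that the candidate matrix $B := -\mathcal{L} + \frac{1}{\lambda}\nu\nu^t$ satisfies $B\,\Delta(G) = I$; a right inverse of an invertible square matrix is automatically its two-sided inverse. I would first record that $B$ is well defined: the hypothesis $\cof \Delta(G) \neq 0$ guarantees $\Psi_{n_1,n_2,\cdots,n_t} \neq 0$, so $\lambda$, $\nu$, and $\mathcal{L}$ from Eqns.~\eqref{eqn:lambda_sq_multi1}--\eqref{eqn:lap_like_sq_multi} all make sense, while $\lambda = \det\Delta(G)/\cof\Delta(G) \neq 0$ makes the factor $1/\lambda$ legitimate.

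Next I would expand the product linearly as
\[
B\,\Delta(G) = -\mathcal{L}\,\Delta(G) + \frac{1}{\lambda}\,\nu\,\bigl(\nu^t \Delta(G)\bigr).
\]
The first term is handled directly by Lemma~\ref{lem:Lsq+I=nu}, which yields $\mathcal{L}\Delta(G) = \nu\mathds{1}^t - I$, hence $-\mathcal{L}\Delta(G) = I - \nu\mathds{1}^t$. For the second term I would invoke the symmetry of $\Delta(G)$ together with Lemma~\ref{lem:sq-nu-1}: since $\Delta(G)\nu = \lambda\mathds{1}$, transposing gives $\nu^t\Delta(G) = \lambda\mathds{1}^t$, so that $\frac{1}{\lambda}\,\nu\,(\nu^t\Delta(G)) = \frac{1}{\lambda}\,\nu\,(\lambda\mathds{1}^t) = \nu\mathds{1}^t$.

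Combining the two computations gives $B\,\Delta(G) = (I - \nu\mathds{1}^t) + \nu\mathds{1}^t = I$, which proves the claim. There is essentially no analytic obstacle remaining at this stage: all the genuine work — verifying the three entrywise cases of $\mathcal{L}\Delta(G)+I = \nu\mathds{1}^t$ by means of the recurrence identities of Lemma~\ref{lem:identities} — has already been discharged inside Lemma~\ref{lem:Lsq+I=nu}, and the vector identity $\Delta(G)\nu = \lambda\mathds{1}$ inside Lemma~\ref{lem:sq-nu-1}. The only points demanding care are bookkeeping ones: confirming the nonvanishing of $\lambda$ so that the rank-one correction is defined, and correctly exploiting the symmetry of $\Delta(G)$ to obtain $\nu^t\Delta(G) = \lambda\mathds{1}^t$ from Lemma~\ref{lem:sq-nu-1} rather than attempting to recompute $\nu^t\Delta(G)$ entrywise from scratch.
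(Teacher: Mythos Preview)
Your proof is correct and follows essentially the same approach as the paper: both arguments combine Lemma~\ref{lem:sq-nu-1} (via the symmetry of $\Delta(G)$ to obtain $\nu^t\Delta(G)=\lambda\mathds{1}^t$) with Lemma~\ref{lem:Lsq+I=nu} to verify that $\bigl(-\mathcal{L}+\tfrac{1}{\lambda}\nu\nu^t\bigr)\Delta(G)=I$, after noting that $\lambda\neq 0$ ensures the rank-one term is well defined.
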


\begin{proof}
Using Lemma~\ref{lem:sq-nu-1}, we have $\nu^t \Delta(G)=\lambda \mathds{1}^t$, which implies that $\nu \nu^t \Delta(G) = \lambda \nu \mathds{1}^t$. By Eqn.~\eqref{eqn:lambda_sq_multi1}, $\det \Delta(G)\neq 0$ if and only if $\lambda\neq 0$, and hence using Lemma~\ref{lem:Lsq+I=nu}, we get $ \mathcal{L}\Delta(G)+I=\nu \mathds{1}^t =\dfrac{1}{\lambda} \nu \nu^t \Delta(G)$.  Therefore, $\Delta(G)^{-1} = - \mathcal{L} + \dfrac{1}{\lambda} \nu \nu^t$.
\end{proof}

We conclude this section with the result that gives a block matrix form for the inverse of $\Delta(K_{n_1,n_2,\cdots,n_t})$, whenever it exists.

\begin{theorem}
Let $G$ be a complete $t$-partite graph $K_{n_1,n_2,\cdots,n_t}$ and  $\Delta(G)$ be the squared distance matrix of $G$. If $\det  \Delta(G)\neq 0$, then the inverse in $t \times t$ block form is given by $\Delta(G)^{-1}=[X_{ij}]$, where
\begin{equation}\label{eqn:Delta_inv}
X_{ij}=
\begin{cases}
\left(\dfrac{3 \Theta_{\widehat{n_i}} + \Phi_{\widehat{n_i}}}{4 \Theta_{n_1, n_2,\cdots,n_t}} \right)J_{n_i} - \dfrac{1}{4} I_{n_i} & \textup{if} \ i = j,\\
\\
-\dfrac{\Phi_{\widehat{n_i,n_j}}}{\Theta_{n_1, n_2,\cdots,n_t}} J_{n_i \times n_j} & \textup{if} \ i \neq j.\\
\end{cases}
\end{equation}
\end{theorem}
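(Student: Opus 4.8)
The plan is to start from the rank-one expression $\Delta(G)^{-1} = -\mathcal{L} + \frac{1}{\lambda}\nu\nu^t$ supplied by Theorem~\ref{thm:Delta_inv} and simply read off its entries block by block, then collapse each block to the claimed combination of $J$ and $I$ using the identities of Lemma~\ref{lem:identities}. Throughout I abbreviate $\Theta=\Theta_{n_1,n_2,\cdots,n_t}$, $\Psi=\Psi_{n_1,n_2,\cdots,n_t}$ and $\Phi=\Phi_{n_1,n_2,\cdots,n_t}$. The crucial structural observation is that both ingredients are constant on blocks: by Eqn.~\eqref{eqn:lap_like_sq_multi} the matrix $\mathcal{L}$ takes only the three values $a_i,b_i,c_{ij}$, and by Eqn.~\eqref{eqn:nu_sq_multi} the vector $\nu$ equals $\Phi_{\widehat{n_i}}/\Psi$ on every coordinate of $V_i$. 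Writing $\frac{1}{\lambda}=\Psi/\Theta$ from Eqn.~\eqref{eqn:lambda_sq_multi1}, the $(u,v)$ entry of $\frac{1}{\lambda}\nu\nu^t$ is therefore $\Phi_{\widehat{n_i}}\Phi_{\widehat{n_j}}/(\Theta\Psi)$ whenever $u\in V_i$ and $v\in V_j$. This already shows that each block of $\Delta(G)^{-1}$ has the form $\alpha J + \beta I$, so the task reduces to identifying $\alpha$ and $\beta$ in each of the three cases.

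I would then handle the cases separately. For $u\in V_i$, $v\in V_j$ with $i\neq j$, the entry equals $-c_{ij} + \Phi_{\widehat{n_i}}\Phi_{\widehat{n_j}}/(\Theta\Psi)$; beyond $\Theta = \Phi+\Psi$ (Eqn.~\eqref{eqn:theta}) the only fact needed is the product identity $\Phi_{\widehat{n_i}}\Phi_{\widehat{n_j}} = \Phi\,\Phi_{\widehat{n_i,n_j}}$, immediate from the definitions in Eqn.~\eqref{eqn:phi}. Substituting and using $\Phi-\Theta=-\Psi$ collapses the entry to exactly $-\Phi_{\widehat{n_i,n_j}}/\Theta$, the claimed off-diagonal block. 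For the diagonal block $X_{ii}$, the within-block off-diagonal entry is $-b_i + \Phi_{\widehat{n_i}}^2/(\Theta\Psi)$, which I would reduce to $(3\Theta_{\widehat{n_i}}+\Phi_{\widehat{n_i}})/(4\Theta)$ by clearing denominators and rewriting $\Theta,\Psi$ through the factor $3n_i-4$ via identities (b) and (c) of Lemma~\ref{lem:identities}. The diagonal-within-block entry differs from this by the scalar $-(a_i-b_i)$, and a short computation with the same two identities gives $a_i-b_i=\tfrac14$, which accounts precisely for the $-\tfrac14 I_{n_i}$ term.

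The main obstacle is purely computational: verifying the within-block reduction, namely that $-b_i + \Phi_{\widehat{n_i}}^2/(\Theta\Psi)$ simplifies to $(3\Theta_{\widehat{n_i}}+\Phi_{\widehat{n_i}})/(4\Theta)$. The cleanest route I would take is to set $m=3n_i-4$ and abbreviate $P=\Phi_{\widehat{n_i}}$, $S=\Psi_{\widehat{n_i}}$, so that the definition of $\Phi$ together with identities (b) and (c) read $\Phi=mP$, $\Psi=mS+n_iP$, $\Theta=(m+n_i)P+mS$ and $\Theta_{\widehat{n_i}}=P+S$. Clearing the common denominator $4\Theta\Psi$ then turns the claimed equality into a polynomial identity in $P$ and $S$ whose $PS$- and $S^2$-coefficients match automatically and whose $P^2$-coefficients agree exactly because $m+n_i+4=4n_i$. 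The same substitution reduces the auxiliary identity $a_i-b_i=\tfrac14$ to $n_iP+mS=\Psi$, which is once again just identity (c). Once these two scalar identities are established, assembling the three cases reconstructs the block form of Eqn.~\eqref{eqn:Delta_inv}, completing the proof.
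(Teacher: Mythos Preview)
Your computation is correct and the route via Theorem~\ref{thm:Delta_inv} is genuinely different from the paper's: the paper does not derive Eqn.~\eqref{eqn:Delta_inv} from the rank-one formula at all, but simply checks $\Delta(G)X=I$ block by block using Lemma~\ref{lem:identities}. Your approach is more explanatory (it shows \emph{where} the formula comes from rather than verifying it post hoc), while the paper's direct verification is more self-contained.

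There is, however, a real gap in your argument. Theorem~\ref{thm:Delta_inv}, and indeed the very definitions of $\mathcal{L}$, $\nu$ and $\lambda$ in Eqns.~\eqref{eqn:lambda_sq_multi1}--\eqref{eqn:lap_like_sq_multi}, require $\cof\Delta(G)\neq 0$ (equivalently $\Psi\neq 0$), whereas the theorem you are proving assumes only $\det\Delta(G)\neq 0$. These hypotheses are not equivalent: by Theorem~\ref{thm:det-cof-zero} one can have $\Psi=0$ while $\Theta\neq 0$ (for instance $K_{2,1}$). In that regime your starting point $-\mathcal{L}+\tfrac{1}{\lambda}\nu\nu^t$ is undefined, so the derivation does not apply. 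The paper's direct verification sidesteps this entirely because the candidate $X$ involves only $\Theta$ in its denominators. To salvage your approach you would need either to handle the locus $\Psi=0$ separately, or to argue that the polynomial identity $\Delta(G)X=I$ (after clearing $\Theta$) holds on the Zariski-open set $\Psi\neq 0$ and hence everywhere; neither step is in the proposal.
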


\begin{proof}
Let $ \Delta(G)=[\Delta_{ij}]$ be the $t \times t$ block form of the squared distance matrix of $G$ in Eqn.~\eqref{eqn:D(K)}. and  $X=[X_{ij}]$ be a $t \times t$ block matrix, where $X_{ij}$ defined in Eqn.~\eqref{eqn:Delta_inv}. Consider the block matrix $Y=  \Delta(G)X= [Y_{ij}]$, where   $Y_{ij}= \sum_{k=1}^t \Delta_{ik} X_{kj}$ for $1\leq i,j \leq t$. We will use Eqns.~\eqref{eqn:phi} -~\eqref{eqn:theta} and identities of Lemma~\ref{lem:identities} to show that $Y=I$ to complete the proof.\\

\noindent \underline{\textbf{Case 1:}} For $i=j$; $1\leq i \leq t$.
\begin{align*}
Y_{ii} &= (J_{n_i}-I_{n_i}) \left[ \left( \dfrac{3 \Theta_{\widehat{n_i}} + \Phi_{\widehat{n_i}}}{ \Theta_{n_1, n_2,\cdots,n_t}} \right)J_{n_i} -  I_{n_i} \right] - \sum_{k=1 \atop k \neq i}^t \dfrac{\Phi_{\widehat{n_kn_i}}}{\Theta_{n_1, n_2,\cdots,n_t}} J_{n_i \times n_k}  J_{n_k \times n_i}\\
&=  \dfrac{3 \Theta_{\widehat{n_i}} + \Phi_{\widehat{n_i}}}{\Theta_{n_1, n_2,\cdots,n_t}} \times n_i J_{n_i} - J_{n_i} - \dfrac{3 \Theta_{\widehat{n_i}} + \Phi_{\widehat{n_i}}}{\Theta_{n_1, n_2,\cdots,n_t}} \times J_{n_i} + I_{n_i} - \sum_{k=1 \atop k \neq i}^t \dfrac{\Phi_{\widehat{n_kn_i}}}{\Theta_{n_1, n_2,\cdots,n_t}} \times n_k J_{n_i}\\
&=\left[\dfrac{3n_i \Theta_{\widehat{n_i}} + n_i \Phi_{\widehat{n_i}}}{\Theta_{n_1, n_2,\cdots,n_t}} - \dfrac{3 \Theta_{\widehat{n_i}} + \Phi_{\widehat{n_i}}}{ \Theta_{n_1, n_2,\cdots,n_t}} - 1 \right]J_{n_i} + I_{n_i}- \sum_{k=1 \atop k \neq i}^t  \dfrac{n_k \Phi_{\widehat{n_kn_i}}}{\Theta_{n_1, n_2,\cdots,n_t}}   J_{n_i}\\
&=\left[\dfrac{3n_i \Theta_{\widehat{n_i}} + n_i \Phi_{\widehat{n_i}}}{\Theta_{n_1, n_2,\cdots,n_t}} - \dfrac{3 \Theta_{\widehat{n_i}} + \Phi_{\widehat{n_i}}}{ \Theta_{n_1, n_2,\cdots,n_t}} - 1 \right]J_{n_i} + I_{n_i}-   \dfrac{\Psi_{\widehat{n_i}}}{\Theta_{n_1, n_2,\cdots,n_t}} J_{n_i}\\
&=\left[\dfrac{3n_i \Theta_{\widehat{n_i}} + n_i \Phi_{\widehat{n_i}}}{\Theta_{n_1, n_2,\cdots,n_t}} - \dfrac{3 \Theta_{\widehat{n_i}} + \Phi_{\widehat{n_i}}}{ \Theta_{n_1, n_2,\cdots,n_t}} - 1 \right]J_{n_i} + I_{n_i}-   \dfrac{\Theta_{\widehat{n_i}}- \Phi_{\widehat{n_i}} }{\Theta_{n_1, n_2,\cdots,n_t}} J_{n_i}\\
&= \left[\dfrac{3n_i \Theta_{\widehat{n_i}} + n_i \Phi_{\widehat{n_i}}}{\Theta_{n_1, n_2,\cdots,n_t}} - \dfrac{3 \Theta_{\widehat{n_i}} + \Phi_{\widehat{n_i}}}{ \Theta_{n_1, n_2,\cdots,n_t}} - \dfrac{\Theta_{\widehat{n_i}} - \Phi_{\widehat{n_i}}}{\Theta_{n_1, n_2,\cdots,n_t}} - 1 \right]J_{n_i} + I_{n_i}\\
&= \left[\dfrac{(3n_i-4)\Theta_{\widehat{n_i}} +n_i\Phi_{\widehat{n_i}} -\Theta_{n_1, n_2,\cdots,n_t}}{\Theta_{n_1, n_2,\cdots,n_t}} \right]J_{n_i} + I_{n_i} = I_{n_i}.
\end{align*}

\noindent \underline{\textbf{Case 2:}} For $i\neq j$; $1\leq i,j \leq t$.
\begin{align*}
Y_{ij} &= 4(J_{n_i}-I_{n_i})  \left[
-\dfrac{\Phi_{\widehat{n_i,n_j}}}{\Theta_{n_1, n_2,\cdots,n_t}} J_{n_i \times n_j}\right] + J_{n_i \times n_j}  \left[ \left( \dfrac{3 \Theta_{\widehat{n_j}} + \Phi_{\widehat{n_j}}}{4 \Theta_{n_1, n_2,\cdots,n_t}} \right)J_{n_j} - \dfrac{1}{4} I_{n_j}
\right]\\
&\hspace*{9.5cm} - \sum_{k=1 \atop k \neq i,j}^t  \dfrac{\Phi_{\widehat{n_k,n_j}}}{\Theta_{n_1, n_2,\cdots,n_t}}  J_{n_i \times n_k} J_{n_k \times n_j}\\
&= \left[-  \dfrac{4n_i\Phi_{\widehat{n_i,n_j}}}{\Theta_{n_1, n_2,\cdots,n_t}} + \dfrac{4\Phi_{\widehat{n_i,n_j}}}{\Theta_{n_1, n_2,\cdots,n_t}}+  \dfrac{n_j (3 \Theta_{\widehat{n_j}} + \Phi_{\widehat{n_j}})}{4 \Theta_{n_1, n_2,\cdots,n_t}}- \dfrac{1}{4} 
 \right] J_{n_i \times n_j}- \sum_{k=1 \atop k \neq i,j}^t \dfrac{n_k  \Phi_{  \widehat{n_k,n_j}}}{\Theta_{n_1, n_2,\cdots,n_t}}J_{n_i \times n_j}\\
&= \left[-  \dfrac{3n_i\Phi_{\widehat{n_i,n_j}}}{\Theta_{n_1, n_2,\cdots,n_t}} + \dfrac{4\Phi_{\widehat{n_i,n_j}}}{\Theta_{n_1, n_2,\cdots,n_t}}+  \dfrac{ 3n_j \Theta_{\widehat{n_j}} +n_j \Phi_{\widehat{n_j}}}{4 \Theta_{n_1, n_2,\cdots,n_t}}- \dfrac{1}{4}  \right] J_{n_i \times n_j}  - \sum_{k=1 \atop k \neq j}^t \dfrac{n_k  \Phi_{  \widehat{n_k,n_j}}}{\Theta_{n_1, n_2,\cdots,n_t}}J_{n_i \times n_j}\\
&= \left[-  \dfrac{(3n_i-4)\Phi_{\widehat{n_i,n_j}}}{\Theta_{n_1, n_2,\cdots,n_t}} +  \dfrac{ 3n_j \Theta_{\widehat{n_j}} +n_j \Phi_{\widehat{n_j}}}{4 \Theta_{n_1, n_2,\cdots,n_t}}- \dfrac{1}{4}  \right] J_{n_i \times n_j}  - \dfrac{\Psi_{ \widehat{n_j}}  }{\Theta_{n_1, n_2,\cdots,n_t}}J_{n_i \times n_j}\\
&= \left[-  \dfrac{\Phi_{\widehat{n_j}}}{\Theta_{n_1, n_2,\cdots,n_t}} +  \dfrac{ 3n_j \Theta_{\widehat{n_j}} +n_j \Phi_{\widehat{n_j}}-\Theta_{n_1, n_2,\cdots,n_t}}{4 \Theta_{n_1, n_2,\cdots,n_t}} - \dfrac{\Psi_{ \widehat{n_j}}  }{\Theta_{n_1, n_2,\cdots,n_t}} \right] J_{n_i \times n_j} \\
&= \left[ -\dfrac{\Phi_{\widehat{n_j}}}{\Theta_{n_1, n_2,\cdots,n_t}} + \dfrac{3n_j \Theta_{\widehat{n_j}}+n_j \Phi_{\widehat{n_j}}-\Theta_{n_1, n_2,\cdots,n_t} }{4\Theta_{n_1, n_2,\cdots,n_t}} - \dfrac{\Theta_{\widehat{n_j}}-\Phi_{\widehat{n_j}}}{\Theta_{n_1, n_2,\cdots,n_t}}
\right]J_{n_i \times n_j} \\
&= \left[\dfrac{(3n_j-4)\Theta_{\widehat{n_j}} +n_j\Phi_{\widehat{n_j}} -\Theta_{n_1, n_2,\cdots,n_t}}{4\Theta_{n_1, n_2,\cdots,n_t}} \right]J_{n_i \times n_j} = \textbf{0}_{n_i \times n_j}.
\end{align*}
\end{proof}

In the next section, we will discuss  a few properties of the  Laplacian-like matrix $\mathcal{L}$ defined in Eqn.~\eqref{eqn:lap_like_sq_multi} and also investigate the inertia $\textup{In}(\mathcal{L})$.

\section{ A Few  Properties of the Laplacian-like Matrix }\label{sec:properties_of_L}

For a  complete $t$-partite graph $K_{n_1,n_2,\cdots,n_t}$ on $n=\sum_{i=1}^t n_i$ vertices, it is known that $0$ is an eigenvalue of the Laplacian-like matrix $\mathcal{L}$  defined in Eqn.~\eqref{eqn:lap_like_sq_multi}. In the next result, we prove that $0$ is a simple eigenvalue and also compute the cofactor of any two elements of $\mathcal{L}$ if $\det  \Delta(G) \neq 0 $.

\begin{theorem}\label{thm:rank_cof}
Let $G=(V,E)$ be a complete $t$-partite graph $K_{n_1,n_2,\cdots,n_t}$ on $n=\sum_{i=1}^n n_i$ vertices with $\cof \Delta(G) \neq 0$.  If $\mathcal{L}$ is the Laplacian-like matrix  defined in Eqn.~\eqref{eqn:lap_like_sq_multi}, then  {\textup{ rank}} $(\mathcal{L})= n-1$. Furthermore, if $\det  \Delta(G) \neq 0 $, then the cofactors of any two elements of $\mathcal{L}$ are equal to $ \dfrac{(-1)^{n-1}}{\cof \Delta(G)}$.
\end{theorem}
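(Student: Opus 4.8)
The plan is to prove the two assertions in sequence, leaning heavily on the machinery already established. For the rank claim, the key observation is that we have already produced, in Theorem~\ref{thm:Delta_inv}, an explicit factorization-type identity $\mathcal{L}\Delta(G) + I = \nu \mathds{1}^t$ valid whenever $\cof \Delta(G) \neq 0$ (this is Lemma~\ref{lem:Lsq+I=nu}, which only requires the cofactor condition, not invertibility of $\Delta(G)$). The strategy is to read off the rank of $\mathcal{L}$ from this relation. Since $\nu \mathds{1}^t$ is a rank-one matrix, the identity $\mathcal{L}\Delta(G) = \nu\mathds{1}^t - I$ expresses $\mathcal{L}\Delta(G)$ as a rank-one perturbation of $-I$; provided $\nu^t\mathds{1} \neq 1$ (equivalently $\lambda\neq 0$, which fails exactly when $\det\Delta(G)=0$), this perturbed identity has full rank $n$. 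One must be careful here because the rank claim is asserted under the weaker hypothesis $\cof\Delta(G)\neq 0$ alone, so the argument cannot assume $\Delta(G)$ is invertible. The cleaner route is therefore to combine $\mathcal{L}\mathds{1} = \mathbf{0}$ (which gives $\operatorname{rank}(\mathcal{L}) \leq n-1$) with a lower bound: from $\mathcal{L}\Delta(G) = \nu\mathds{1}^t - I$, I would argue that the image of $\mathcal{L}$ contains an $(n-1)$-dimensional space. Concretely, right-multiplying shows that $\operatorname{rank}(\mathcal{L}\Delta(G)) \geq \operatorname{rank}(I) - 1 = n-1$ since $\nu\mathds{1}^t$ has rank one, and $\operatorname{rank}(\mathcal{L}\Delta(G)) \leq \operatorname{rank}(\mathcal{L})$, forcing $\operatorname{rank}(\mathcal{L}) \geq n-1$. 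Together with the upper bound this yields $\operatorname{rank}(\mathcal{L}) = n-1$, hence $0$ is a simple eigenvalue.

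For the cofactor claim, I would invoke the standard fact that for an $n\times n$ matrix $M$ with $M\mathds{1} = \mathbf{0}$ and $\mathds{1}^t M = \mathbf{0}$ and $\operatorname{rank}(M) = n-1$, all cofactors $c(i,j)$ are equal to a common value; this is a classical consequence of the adjugate identity $\operatorname{adj}(M)\,M = (\det M) I = \mathbf{0}$ together with $M\,\operatorname{adj}(M) = \mathbf{0}$, which forces every column and every row of $\operatorname{adj}(M)$ to be a multiple of $\mathds{1}$, so $\operatorname{adj}(M)$ is a constant multiple of $J_n$. Since $\mathcal{L}$ satisfies exactly these hypotheses (the Laplacian-like property and the rank computation just established), all its cofactors share a common value, call it $\gamma$, so that $\operatorname{adj}(\mathcal{L}) = \gamma J_n$. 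It remains to identify $\gamma$ as $(-1)^{n-1}/\cof\Delta(G)$.

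To pin down $\gamma$ I would now bring in invertibility, $\det\Delta(G)\neq 0$, and use Theorem~\ref{thm:Delta_inv}, namely $\Delta(G)^{-1} = -\mathcal{L} + \tfrac{1}{\lambda}\nu\nu^t$. The idea is to compute a cofactor of $\Delta(G)^{-1}$ two ways. On one hand, $\det(\Delta(G)^{-1}) = 1/\det\Delta(G)$, and the sum of cofactors of $\Delta(G)^{-1}$ can be related to $\cof\Delta(G)$ through the inverse relationship $\cof(A^{-1}) = \cof(A)/\det(A)$ or an analogous adjugate computation. On the other hand, since $\tfrac{1}{\lambda}\nu\nu^t$ is rank one and contributes in a controlled way, the cofactor structure of $\Delta(G)^{-1}$ is governed by $-\mathcal{L}$. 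I expect the cleanest bookkeeping to come from Lemma~\ref{Lem:cof}: applying the row-and-column-subtraction operation to $\Delta(G)^{-1} = -\mathcal{L} + \tfrac{1}{\lambda}\nu\nu^t$ annihilates the Laplacian-like part's contribution to $\cof$ in a predictable manner, reducing the computation to a determinant one can evaluate.

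The main obstacle will be the final constant-chasing in the last paragraph: tracking the sign $(-1)^{n-1}$ and the exact reciprocal relationship between $\gamma$ and $\cof\Delta(G)$ through the interplay of $\det\Delta(G)$, $\cof\Delta(G)$, and $\lambda = \det\Delta(G)/\cof\Delta(G)$. The rank computation and the equality-of-cofactors step are essentially structural and should follow cleanly from the identities already in hand; the delicate part is verifying that the rank-one perturbation $\tfrac{1}{\lambda}\nu\nu^t$ does not disturb the common cofactor value except through the scalar normalization, and confirming that the resulting value is exactly $(-1)^{n-1}/\cof\Delta(G)$ rather than its negative or a scalar multiple. I would double-check this normalization against the smallest nontrivial case (for instance $K_{1,1,1}=K_3$, where $\Delta = J_3 - I_3$ and the quantities are explicitly computable) to fix the sign convention.
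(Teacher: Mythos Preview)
Your rank argument is correct and essentially equivalent to the paper's: both combine the upper bound from $\mathcal{L}\mathds{1}=\mathbf{0}$ with a lower bound extracted from the identity $\mathcal{L}\Delta(G)=\nu\mathds{1}^t-I$ of Lemma~\ref{lem:Lsq+I=nu}. The paper phrases the lower bound via the left null space of $\mathcal{L}\Delta(G)$ (any $\mathbf{x}$ with $\mathbf{x}^t\mathcal{L}\Delta(G)=\mathbf{0}$ satisfies $\mathbf{x}^t=(\mathbf{x}^t\nu)\mathds{1}^t$, hence lies in $\mathrm{span}\{\mathds{1}\}$), whereas you use rank subadditivity $\operatorname{rank}(\nu\mathds{1}^t-I)\ge n-1$; the content is the same. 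Your treatment of the ``all cofactors equal'' step (via $\operatorname{adj}(\mathcal{L})\mathcal{L}=\mathbf{0}=\mathcal{L}\operatorname{adj}(\mathcal{L})$ forcing $\operatorname{adj}(\mathcal{L})=\gamma J_n$) also matches the paper, which cites the same classical fact.

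Where your proposal diverges is in pinning down the value $\gamma$. The paper's route is short and avoids all the cofactor-chasing you anticipate: apply the matrix determinant lemma $\det(A+uv^t)=\det A + v^t\operatorname{adj}(A)u$ directly to $\Delta(G)^{-1}=-\mathcal{L}+\tfrac{1}{\lambda}\nu\nu^t$. Since $\det(-\mathcal{L})=0$ and $\operatorname{adj}(-\mathcal{L})=(-1)^{n-1}\gamma J_n$, this gives
\[
\frac{1}{\det\Delta(G)}=\det(\Delta(G)^{-1})=\frac{(-1)^{n-1}\gamma}{\lambda}\,\nu^t J_n\,\nu=\frac{(-1)^{n-1}\gamma}{\lambda}\Bigl(\sum_{v\in V}\nu(v)\Bigr)^2.
\]
The paper then checks from Eqn.~\eqref{eqn:nu_sq_multi} that $\sum_{v}\nu(v)=1$, and substituting $\lambda=\det\Delta(G)/\cof\Delta(G)$ yields $\gamma=(-1)^{n-1}/\cof\Delta(G)$ in one line. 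By contrast, the identity you propose, $\cof(A^{-1})=\cof(A)/\det(A)$, is not valid in general: the correct relation is $\mathds{1}^t A^{-1}\mathds{1}=\cof(A)/\det(A)$, and applying that to $\Delta(G)^{-1}$ only recovers the known value of $\lambda$, not $\gamma$. Your alternative via Lemma~\ref{Lem:cof} would in principle work but requires evaluating an $(n-1)\times(n-1)$ determinant of a row/column-reduced rank-one perturbation, which is substantially heavier than the one-line determinant-lemma computation above. I would replace your final paragraph with that argument; the key missing ingredient is simply $\mathds{1}^t\nu=1$.
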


\begin{proof}
Let $G=(V,E)$ be a complete $t$-partite graph $K_{n_1,n_2,\cdots,n_t}$ on $n=\sum_{i=1}^n n_i$ vertices with $\cof \Delta(G) \neq 0$.  By Eqn.~\eqref{eqn:lap_like_sq_multi}, we have that Laplacian-like matrix $\mathcal{L}$ is a symmetric matrix and  $\mathcal{L}\mathds{1} = \mathbf{0}$ and $\mathds{1}^t \mathcal{L} = \mathbf{0}$. Thus,   $0$ is an eigenvalue of $\mathcal{L}$ and $\mathds{1}$ is a corresponding eigenvector. Furthermore, using $\mathcal{L}$ and $\Delta(G)$ are symmetric matrices,  $\mathds{1}$ is an  eigenvector of $\mathcal{L} \Delta(G)$ corresponding to the eigenvalue $0$.

Let $\mathbf{x}$ be an eigenvector of $\mathcal{L} \Delta(G)$ corresponding to the eigenvalue $0$, {\it i.e., } $\mathbf{x}^t \mathcal{L}\Delta(G) = 0$.  Assume that $\mathbf{x}$ is not in the span of $\{\mathds{1}\}$. By Lemma~\ref{lem:Lsq+I=nu}, we have  $\mathbf{x}^t(\mathcal{L}\Delta(G) + I)= \mathbf{x}^t\nu \mathds{1}^t$ and hence using $\mathbf{x}^t \mathcal{L}\Delta(G) = 0$, we get $\mathbf{x}^t = \mathbf{x}^t \nu \mathds{1}^t.$ Which is a contradiction to our assumption. Therefore,  $\textup{rank} (\mathcal{L})= n-1$.

Next, let $\det  \Delta(G) \neq 0 $. Then, by Theorem~\ref{thm:Delta_inv} we get $\Delta(G)^{-1} = - \mathcal{L} + \frac{1}{\lambda} \nu \nu^t$. Thus, using the determinant property $\det(A + uv^t) = \det(A) + v^tadj(A)u$, we have 
$$\det(\Delta(G)^{-1}) = \det(- \mathcal{L}) + \frac{1}{\lambda} \nu^t adj(- \mathcal{L}) \nu.$$
Using  $\textup{rank} (\mathcal{L})= n-1$, we have $\ds \det(\Delta(G)^{-1}) =  \frac{1}{\lambda} \nu^t adj(- \mathcal{L}) \nu = \frac{(-1)^{n-1}}{\lambda} \nu^t adj( \mathcal{L}) \nu$. Since $\mathcal{L}$ is a symmetric matrix and $\mathcal{L} \mathds{1} = 0$, using~\cite[Lemma~$4.2$]{Bapat}  the cofactors of any two elements of $\mathcal{L}$ are equal, say $c$. Then, 
\begin{equation}\label{eqn:cof-of-L}
\det(\Delta(G)^{-1}) = \frac{(-1)^{n-1}}{\lambda} \nu^t (cJ) \nu = \frac{(-1)^{n-1} c}{\lambda} \nu^t J \nu= \frac{(-1)^{n-1} c}{\lambda} \left(\sum_{v\in V} \nu(v)\right)^2.
\end{equation}
Using Eqn.~\eqref{eqn:nu_sq_multi}, we have
\begin{align}\label{eqn:nu=1}
\sum_{v\in V}\nu(v) &= \sum_{v\in V}\left [\frac{1}{\Psi_{n_1,n_2,\cdots,n_t}} \left( \sum_{i=1}^t \sum_{v \in V_i} \Phi_{\widehat{n_i}} \right)\right] \nonumber\\
&= \frac{1}{\Psi_{n_1,n_2,\cdots,n_t}}  \sum_{i=1}^t n_i \Phi_{\widehat{n_i}} \nonumber\\
&= \frac{1}{\Psi_{n_1,n_2,\cdots,n_t}}  \sum_{i=1}^t n_i \prod_{j=1\atop j\neq i}^t (3n_j-4)=1.
\end{align}
Substituting Eqns.~\eqref{eqn:lambda_sq_multi1} and \eqref{eqn:nu=1} in Eqn.~\eqref{eqn:cof-of-L}, we get $c=\dfrac{(-1)^{n-1}}{\cof \Delta(G)}$. This completes the proof.
\end{proof}

We now calculate a few eigenvalues of the Laplacian-like matrix $\mathcal{L}$.

\begin{prop}\label{prop:eigen_L}
Let $G$ be a complete $t$-partite graph $K_{n_1,n_2,\cdots,n_t}$ on $n=\sum_{i=1}^n n_i$ vertices with $\cof \Delta(G) \neq 0$ and $\mathcal{L}$ be the Laplacian-like matrix  defined in Eqn.~\eqref{eqn:lap_like_sq_multi}. Then, the following holds.
\begin{enumerate}
\item [$(i)$] $\dfrac{1}{4}$ is an eigenvalue of $\mathcal{L}$  with multiplicity  at least $n - t$. 

\item [$(ii)$]  If  $h=|\{i :n_i=1\}|$ and $h\geq 2$, then $1$ is an eigenvalue of $\mathcal{L}$ with multiplicity at least $h - 1$. 
\end{enumerate}
\end{prop}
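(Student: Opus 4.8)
The plan is to exhibit explicit eigenvectors for each claimed eigenvalue and count them, exploiting the block structure of $\mathcal{L}$: the diagonal block on $V_i$ has constant diagonal $a_i$ and constant off-diagonal $b_i$, while the $(i,j)$ off-diagonal block is the constant matrix $c_{ij}J_{n_i\times n_j}$. Consequently a vector that sums to zero on a block kills every all-ones contribution that block sends out, which is the mechanism I would use to force both the eigenvalue relations and the vanishing of off-block coordinates.

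For part $(i)$, fix a block $V_i$ and let $\mathbf{x}$ be supported on $V_i$ with $\sum_{v\in V_i}x(v)=0$. For $u\in V_i$ one gets $(\mathcal{L}\mathbf{x})(u)=a_i x(u)+b_i\sum_{v\in V_i,\,v\neq u}x(v)=(a_i-b_i)x(u)$, while for $u\in V_j$ with $j\neq i$ the coordinate is $c_{ji}\sum_{v\in V_i}x(v)=0$. Hence $\mathbf{x}$ is an eigenvector with eigenvalue $a_i-b_i$. Substituting the definitions of $a_i,b_i$ and using $\Theta_{\widehat{n_i}}=\Phi_{\widehat{n_i}}+\Psi_{\widehat{n_i}}$ reduces $a_i-b_i$ to $\frac{1}{4\Psi_{n_1,\ldots,n_t}}\big(n_i\Phi_{\widehat{n_i}}+(3n_i-4)\Psi_{\widehat{n_i}}\big)$, and Lemma~\ref{lem:identities}(c) collapses the bracket to $\Psi_{n_1,\ldots,n_t}$, giving eigenvalue $\tfrac14$. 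Since the sum-zero subspace of each $V_i$ has dimension $n_i-1$ and the blocks have pairwise disjoint supports, these eigenvectors span a space of dimension $\sum_{i}(n_i-1)=n-t$, which proves $(i)$.

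For part $(ii)$, let $S=\{i:n_i=1\}$ with $|S|=h$, write $w_i$ for the unique vertex of a singleton block $V_i$ ($i\in S$), and consider vectors $\mathbf{x}$ supported on $\{w_i:i\in S\}$ with $\sum_{i\in S}x(w_i)=0$. The decisive simplification is that $3n_i-4=-1$ for $i\in S$, which makes $\Phi_{\widehat{n_k}}$, the products $\Phi_{\widehat{n_k,n_i}}$ for $k,i\in S$, and $\Psi_{\widehat{n_k}}$ all independent of the particular singleton index. Thus $c_{ki}$ takes a single constant value $(-1)^hP/\Psi_{n_1,\ldots,n_t}$ over distinct $k,i\in S$, where $P=\prod_{j\notin S}(3n_j-4)$, and $a_k$ takes the common value $a^\ast=-\Psi_{\widehat{n_k}}/\Psi_{n_1,\ldots,n_t}$. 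A short computation then gives $(\mathcal{L}\mathbf{x})(w_k)=\big(a^\ast-(-1)^hP/\Psi_{n_1,\ldots,n_t}\big)x(w_k)$, while for $w$ in a non-singleton block $V_j$ the coordinate is $c_{ji}\sum_{i\in S}x(w_i)=0$ since $c_{ji}$ is again constant over $i\in S$. Finally, Lemma~\ref{lem:identities}(c) specialized to $k\in S$ reads $\Psi_{n_1,\ldots,n_t}=-\Psi_{\widehat{n_k}}+(-1)^{h-1}P$, which is exactly the relation forcing the scalar $a^\ast-(-1)^hP/\Psi_{n_1,\ldots,n_t}$ to equal $1$. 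The zero-sum subspace on the $h$ singletons has dimension $h-1$, establishing $(ii)$.

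The routine part is the algebraic collapse through Lemma~\ref{lem:identities}; the step demanding genuine care is verifying that the cross-block contributions vanish in part $(ii)$. This hinges entirely on the constancy of the relevant $\Phi$-products across singleton indices (a consequence of $3n_i-4=-1$), which is precisely what allows the zero-sum condition to annihilate the off-block coordinates and to pin the common eigenvalue to $1$.
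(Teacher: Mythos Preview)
Your proof is correct and follows essentially the same approach as the paper: exhibit zero-sum vectors within each block $V_i$ for part $(i)$ and zero-sum vectors across the singleton blocks for part $(ii)$, then collapse $a_i-b_i$ (respectively $a_i-c_{it}$) to $\tfrac14$ (respectively $1$) via Lemma~\ref{lem:identities}(c). The only cosmetic differences are that you work with the full zero-sum subspaces rather than the specific basis $\{\mathbf{e}(p,q)\}$ the paper uses, and you spell out more explicitly why the cross-block coordinates vanish in part $(ii)$ (via the constancy of $\Phi_{\widehat{n_j,n_i}}$ over singleton $i$), a point the paper leaves implicit in the phrase ``it is easy to see.''
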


\begin{proof}
Let $G = (V,E)$ be a complete $t$-partite graph $K_{n_1,n_2,\cdots,n_t}$  on $n=\sum_{i=1}^t n_i$ vertices such that the vertex $V$ is partitioned into $t$ subsets  $V_i$ for $1 \leq i \leq t$ and $|V_i| = n_i$. Suppose $h=|\{i :n_i=1\}|$ and $t=s+h$  such that  $n_i\geq 2$ for $1\leq i\leq s$ and $n_i=1$ for $s+1 \leq i\leq t=s+h$. 

Let $\mathbf{e}(p,q)$ be an $n$-dimensional column vector whose $p^{\text{th}}$ entry is $1$, $q^{\text{th}}$ entry is $-1$ and $0$ otherwise. Suppose the vertices of $G$ are indexed as in Eqn.~\eqref{eqn:D(K)}. Consider the  set of column vectors
\begin{align*}
\mathcal{E} = \{\mathbf{e}(1,j) \mid j=2,\cdots,n_1\} & \cup \{\mathbf{e}(n_1+1,n_1+j) \mid j=2,\cdots,n_2\}\\
&\cup \cdots \cup \{\mathbf{e} \left(\sum_{k=1}^{s-1}n_k+1,\sum_{k=1}^{s-1}n_k+j \right) \mid j=2,\cdots,n_s\}.
\end{align*}
Let $\mathcal{E}_1=\{\mathbf{e}(1,j) \mid j=2,\cdots,n_1\}$ and $\ds \mathcal{E}_i=\{\mathbf{e} \left(\sum_{k=1}^{i-1}n_k+1,\sum_{k=1}^{i-1}n_k+j \right) \mid j=2,\cdots,n_i\}$ for $i=2,3,\ldots,s$. Then $\ds \mathcal{E}=\cup_{i=1}^s \mathcal{E}_i$.  Using the definition of $\mathcal{L}$ as in Eqn.~\eqref{eqn:lap_like_sq_multi}, it is easy to see 
\begin{equation}\label{eqn:4.2.11}
\mathcal{L}\mathbf{x}= (a_i-b_i)\mathbf{x} \mbox{ for all } \mathbf{x}  \in\mathcal{E}_i; 1\leq i\leq s.
\end{equation}
For $ 1\leq i\leq s$, we have
\begin{align*}
a_i - b_i &= \dfrac{1}{2\Psi_{n_1,n_2,\cdots,n_t}}\left[ \dfrac{n_i-1}{2} \Theta_{\widehat{n_i}} +(n_i-3) \Psi_{\widehat{n_i}}\right] + \dfrac{1}{2\Psi_{n_1,n_2,\cdots,n_t}}\left[\dfrac{1}{2} \Theta_{\widehat{n_i}} +\Psi_{\widehat{n_i}}\right]\\
& = \dfrac{1}{2\Psi_{n_1,n_2,\cdots,n_t}}\left[ \dfrac{n_i}{2} \Theta_{\widehat{n_i}} +(n_i-2) \Psi_{\widehat{n_i}} \right]\\
& = \dfrac{1}{2\Psi_{n_1,n_2,\cdots,n_t}}\left[ \dfrac{n_i}{2} \Phi_{\widehat{n_i}}+ \dfrac{n_i}{2} \Psi_{\widehat{n_i}} +(n_i-2) \Psi_{\widehat{n_i}} \right]\\
& = \dfrac{1}{4\Psi_{n_1,n_2,\cdots,n_t}}\left[ {n_i} \Phi_{\widehat{n_i}}+(3n_i-4) \Psi_{\widehat{n_i}} \right] = \dfrac{1}{4}.
\end{align*}
Thus, from Eqn.~\eqref{eqn:4.2.11}, we get $\mathcal{L}\mathbf{x}= \dfrac{1}{4}\mathbf{x} \mbox{ for all } \mathbf{x}  \in\mathcal{E}$, and hence  $\dfrac{1}{4}$ is an eigenvalue of $\mathcal{L}$  with multiplicity  at least $|\mathcal{E}|=\ds \sum_{i=1}^s n_i -s$. Since $n_i=1$ for $s+1 \leq i\leq t=s+h$, $n-t= \ds \sum_{i=1}^t n_i -t=  \sum_{i=1}^s n_i -s$. This proves part  $(i)$.

To prove part  $(ii)$, let us assume $h=|\{i :n_i=1\}|$ and $h\geq 2$. Let $\widetilde{ \mathcal{E}}=\{\mathbf{e}(s+i,t) : i=1,2,\dots, h-1\}$. For $s+1\leq i < t=s+h$, using the definition of $\mathcal{L}$ as in Eqn.~\eqref{eqn:lap_like_sq_multi}, we have 
$$\mathcal{L}\mathbf{x}= (a_i-c_{it})\mathbf{x} \mbox{ for all } \mathbf{x}  \in\widetilde{ \mathcal{E}}.$$
Since $n_t=1$,  $\Phi_{\widehat{n_i}}= (3n_t-4)\Phi_{\widehat{n_i,n_t}} = - \Phi_{\widehat{n_i,n_t}}$. Thus, for  $s+1\leq i < t=s+h$,  we have
\begin{align*}
a_i - c_{it} &= \dfrac{1}{2\Psi_{n_1,n_2,\cdots,n_t}}\left[ \dfrac{n_i-1}{2} \Theta_{\widehat{n_i}} +(n_i-3) \Psi_{\widehat{n_i}}\right] - \dfrac{1}{\Psi_{n_1,n_2,\cdots,n_t}}\Phi_{\widehat{n_i,n_t}} \\
& = \dfrac{1}{2\Psi_{n_1,n_2,\cdots,n_t}}\left[  -2 \Psi_{\widehat{n_i}} - 2\Phi_{\widehat{n_i,n_t}} \right]\\
& = \dfrac{1}{\Psi_{n_1,n_2,\cdots,n_t}}\left[- \Psi_{\widehat{n_i}} - \Phi_{\widehat{n_i,n_t}} \right] \\
& =  \dfrac{1}{\Psi_{n_1,n_2,\cdots,n_t}}\left[- \Psi_{\widehat{n_i}} + \Phi_{\widehat{n_i}} \right]\\
& =\dfrac{1}{\Psi_{n_1,n_2,\cdots,n_t}}\left[(3n_i-4) \Psi_{\widehat{n_i}} + n_i\Phi_{\widehat{n_i}} \right] =1.
\end{align*}
Hence, $\mathcal{L}\mathbf{x}=\mathbf{x} \mbox{ for all } \mathbf{x}  \in\widetilde{ \mathcal{E}}$ and this completes the proof.
\end{proof}

Given a real symmetric matrix $M$ of order $n \times n$, we use the following convention where the eigenvalues of $M$ are in decreasing order:
\begin{equation}\label{eqn:ev-hermitian}
\lambda_1(M) \geq \lambda_2(M)\geq \cdots \geq \lambda_{n-1}(M)\geq  \lambda_{n}(M).
\end{equation}
We now state the Weyl’s inequality that gives interlacing inequalities of a rank one perturbation to a real symmetric matrix.
\begin{theorem}~\cite[Corollary~$4.3.9$]{Horn}\label{thm:interlacing}
Let $A$ and $B$ be real symmetric matrices of order $n \times n$ with eigenvalues ordered as in Eqn.~\eqref{eqn:ev-hermitian} such that $B = A+\alpha \alpha^t$, where $\alpha$ is a column vector. Then, 
\begin{equation*}
		\lambda_1(B) \geq \lambda_1(A) \geq \lambda_2(B) \geq \lambda_2(A) \geq \cdots \geq \lambda_n(B) \geq \lambda_n(A).
	\end{equation*}
\end{theorem}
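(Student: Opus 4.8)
The plan is to establish, for each index, the two one-sided bounds $\lambda_k(B) \geq \lambda_k(A)$ (for $1 \leq k \leq n$) and $\lambda_k(B) \leq \lambda_{k-1}(A)$ (for $2 \leq k \leq n$); chaining these across all $k$ produces the displayed interlacing. Both bounds will come from the Courant--Fischer min--max characterisation, which for a real symmetric $M$ and the decreasing ordering of Eqn.~\eqref{eqn:ev-hermitian} reads
\begin{equation*}
\lambda_k(M) = \max_{\dim S = k}\ \min_{0 \neq x \in S} \frac{x^t M x}{x^t x} = \min_{\dim S = n-k+1}\ \max_{0 \neq x \in S} \frac{x^t M x}{x^t x}.
\end{equation*}
The single new feature of $B = A + \alpha \alpha^t$ that drives everything is that $\alpha \alpha^t$ is positive semidefinite of rank one, so that $x^t B x = x^t A x + (\alpha^t x)^2 \geq x^t A x$ for every $x$, with equality precisely on the hyperplane $\alpha^\perp = \{x : \alpha^t x = 0\}$ of codimension one.

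For the lower bound I would argue by monotonicity. Since $x^t B x \geq x^t A x$ for all $x$, the Rayleigh quotient of $B$ dominates that of $A$ pointwise; hence for every $k$-dimensional subspace $S$ the inner minimum for $B$ is at least that for $A$, and taking the maximum over all such $S$ in the first Courant--Fischer formula yields $\lambda_k(B) \geq \lambda_k(A)$ for all $1 \leq k \leq n$.

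For the upper bound I would exploit the codimension-one structure. Fix $2 \leq k \leq n$ and let $u_1,\dots,u_n$ be an orthonormal eigenbasis of $A$ with $A u_j = \lambda_j(A) u_j$. Put $S_0 = \operatorname{span}\{u_{k-1},u_k,\dots,u_n\}$, of dimension $n-k+2$, and intersect it with $\alpha^\perp$. A dimension count gives $\dim (S_0 \cap \alpha^\perp) \geq (n-k+2) + (n-1) - n = n-k+1$. For any $x$ in this intersection we have $\alpha^t x = 0$, so $x^t B x = x^t A x$, while $x \in S_0$ forces $x^t A x / x^t x \leq \lambda_{k-1}(A)$. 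Choosing an $(n-k+1)$-dimensional subspace inside $S_0 \cap \alpha^\perp$ and feeding it into the second (min--max) formula gives $\lambda_k(B) \leq \max_{0 \neq x \in S_0 \cap \alpha^\perp} x^t B x / x^t x \leq \lambda_{k-1}(A)$.

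The main obstacle is precisely this upper-bound step, where the rank-one hypothesis is genuinely used: it enters through the fact that $\alpha^\perp$ has codimension one, so that $S_0 \cap \alpha^\perp$ retains dimension $n-k+1$, exactly the dimension admissible in the min--max formula. A rank-$r$ perturbation would only guarantee codimension $r$ and would shift the index by $r$, which is why the interlacing here is this tight. Combining $\lambda_k(A) \leq \lambda_k(B) \leq \lambda_{k-1}(A)$ for $2 \leq k \leq n$ with $\lambda_1(B) \geq \lambda_1(A)$ then yields the full chain. I note that the statement is also the special case of the classical Weyl inequalities $\lambda_{i+j-1}(A+C) \leq \lambda_i(A) + \lambda_j(C)$ (with $C = \alpha \alpha^t$, $i=k-1$, $j=2$) and $\lambda_{i+j-n}(A+C) \geq \lambda_i(A) + \lambda_j(C)$ (with $i=k$, $j=n$), so one could alternatively quote those directly.
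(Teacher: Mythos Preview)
Your argument is correct: the monotonicity step gives $\lambda_k(B)\geq\lambda_k(A)$, and the codimension-one intersection with $\alpha^\perp$ together with the min--max formula gives $\lambda_k(B)\leq\lambda_{k-1}(A)$, which is exactly the standard proof (and indeed the one in Horn--Johnson to which the theorem is attributed).

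There is, however, nothing to compare against: the paper does not prove this statement. It is quoted verbatim from \cite[Corollary~4.3.9]{Horn} and used as a black box in the proof of Theorem~\ref{thm:inertial_L}. So your proposal supplies a proof where the paper supplies only a citation; the approaches cannot differ because the paper has none.
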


In the following theorem, we compute the inertia of the  Laplacian-like matrix $\mathcal{L}$ subject to the condition $\det \Delta(K_{n_1,n_2,\cdots,n_t}) \neq 0$.

\begin{theorem}\label{thm:inertial_L}
Let $G$ be a complete $t$-partite graph $K_{n_1,n_2,\cdots,n_t}$ on $n=\sum_{i=1}^n n_i$ vertices with $\cof \Delta(G) \neq 0$ and  $\mathcal{L}$ be the Laplacian-like matrix  defined in Eqn.~\eqref{eqn:lap_like_sq_multi}. Then, the following holds.

\begin{itemize}
\item[$(i)$] If $n_i\geq 2$ for $1\leq i \leq t$, then $\textup{In}(\mathcal{L})= (n-t, 1 , t-1).$

\item[$(ii)$]Let $t=s+h$ and $h=|\{i :n_i=1\}|$. If $n_i\geq 2$ for $1\leq i \leq s$ and $n_i= 1$ for $s+1\leq i \leq s+h$, then

	\begin{equation}\label{eqn:inertia_lap}
		\textup{In}(\mathcal{L})=
		\begin{cases}
					(n-s-1,1,s) & \text{ if }  h-1 >\ds \sum_{i=1}^s \dfrac{n_i}{3n_i-4}, \\
			 
			(n-s,1,s-1) & \text{ if }  h-1 <\ds \sum_{i=1}^s \dfrac{n_i}{3n_i-4}. \\
		\end{cases}
	\end{equation}
\end{itemize}
\end{theorem}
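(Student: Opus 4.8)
The plan is to use the inverse formula of Theorem~\ref{thm:Delta_inv}. In part~$(i)$ every $n_i\ge 2$, and in part~$(ii)$ the two hypotheses $h-1>\sum_{i=1}^s n_i/(3n_i-4)$ and $h-1<\sum_{i=1}^s n_i/(3n_i-4)$ both exclude the equality $\sum_{i=1}^s n_i/(3n_i-4)=h-1$, which by Theorem~\ref{thm:det-cof-zero}$(i)$ is precisely where $\det\Delta(G)=0$; hence in every case $\det\Delta(G)\neq 0$ and $\mathcal L=-\Delta(G)^{-1}+\tfrac1\lambda\,\nu\nu^{t}$ holds. Since $\mathcal L$ is a single rank-one modification of the symmetric matrix $-\Delta(G)^{-1}$, I would read off $\textup{In}(\mathcal L)$ from $\textup{In}(\Delta(G))$ by Weyl interlacing (Theorem~\ref{thm:interlacing}), sharpened by the explicit eigenvectors of Proposition~\ref{prop:eigen_L} and the cofactor identity of Theorem~\ref{thm:rank_cof}.

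First I would compute $\textup{In}(\Delta(G))$ from the block form~\eqref{eqn:D(K)}. Each vector that is zero-sum on one part and zero elsewhere is a $(-4)$-eigenvector, so $-4$ occurs with multiplicity $n-t$ (all negative); on the orthogonal complement $\Delta(G)$ acts as the symmetric quotient $\tilde B=\textup{diag}(3n_i-4)+\mathbf u\mathbf u^{t}$, $\mathbf u=(\sqrt{n_1},\dots,\sqrt{n_t})^{t}$. As $3n_i-4\ge 2$ for $n_i\ge 2$ and $=-1$ for $n_i=1$, $\tilde B$ is a positive rank-one update of a diagonal matrix of inertia $(s,0,h)$, so $\textup{In}(\tilde B)\in\{(s,0,h),(s+1,0,h-1)\}$; the sign of $\det\tilde B=\Theta_{n_1,\dots,n_t}=\Phi_{n_1,\dots,n_t}\big(1+\sum_{i=1}^t n_i/(3n_i-4)\big)$ selects between them, the switch happening at $\sum_{i=1}^s n_i/(3n_i-4)=h-1$. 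In particular, for part~$(i)$ ($h=0$) one gets $\tilde B\succ 0$ and $\textup{In}(\Delta(G))=(t,0,n-t)$.

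Next I would use Proposition~\ref{prop:eigen_L}: the $\tfrac14$-eigenvectors (multiplicity $n-t$) and, when $h\ge 2$, the $1$-eigenvectors $\mathbf e(s+i,t)$ (multiplicity $h-1$, which are also $(-1)$-eigenvectors of $\Delta(G)$) are all orthogonal to $\nu$—because $\nu$ is constant on each part and $\Phi_{\widehat{n_i}}$ takes the same value on every singleton—so they survive unchanged in $\mathcal L$ and are annihilated by the rank-one term. This secures $n_+(\mathcal L)\ge n-s-1$ and confines the whole perturbation to the $(s+1)$-dimensional, $\Delta(G)$-invariant complement $W^{\perp}$, which contains both $\mathds 1$ and $\nu$ and on which exactly one rank-one update occurs. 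For part~$(i)$, $W^{\perp}$ is the full $t$-dimensional quotient, $-\Delta(G)^{-1}|_{W^{\perp}}$ is negative definite, $\lambda=\Theta_{n_1,\dots,n_t}/\Psi_{n_1,\dots,n_t}>0$, and the positive update together with $\mathcal L\mathds 1=\mathbf 0$ forces inertia $(0,1,t-1)$ there; hence $\textup{In}(\mathcal L)=(n-t,1,t-1)$.

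The main obstacle is part~$(ii)$. Interlacing on $W^{\perp}$ together with $\textup{rank}(\mathcal L)=n-1$ (Theorem~\ref{thm:rank_cof}) confines $n_-(\mathcal L)$ to $\{s-1,s\}$, but the choice is delicate because the sign of the update $1/\lambda=\Psi_{n_1,\dots,n_t}/\Theta_{n_1,\dots,n_t}$ is not constant: one checks $\lambda>0$ when $\sum_{i=1}^s n_i/(3n_i-4)<h-1$ or $>h$, while $\lambda<0$ in the window $h-1<\sum_{i=1}^s n_i/(3n_i-4)<h$. Thus $\textup{In}(\Delta(G))$ alone does not settle the answer, and I would instead use a parity argument that avoids the sign altogether. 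By Theorem~\ref{thm:rank_cof} every cofactor of $\mathcal L$ equals $(-1)^{n-1}/\cof\Delta(G)$, and since $\mathcal L=\mathcal L^{t}$ has rank $n-1$ with $\mathcal L\mathds 1=\mathbf 0$, the product of its nonzero eigenvalues is $n$ times this common cofactor; hence $(-1)^{n_-(\mathcal L)}=(-1)^{n-1}\,\textup{sign}\,\cof\Delta(G)$. Combined with $\textup{sign}\,\cof\Delta(G)=(-1)^{n-s}\,\textup{sign}\big(\sum_{i=1}^s n_i/(3n_i-4)-h\big)$, this gives $(-1)^{n_-(\mathcal L)}=(-1)^{s+1}\,\textup{sign}\big(\sum_{i=1}^s n_i/(3n_i-4)-h\big)$, forcing $n_-(\mathcal L)=s-1$ precisely when $\sum_{i=1}^s n_i/(3n_i-4)>h$ and $n_-(\mathcal L)=s$ otherwise. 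Consequently the genuine dividing line is the sign of $\cof\Delta(G)$, i.e.\ $\sum_{i=1}^s n_i/(3n_i-4)\gtrless h$ rather than $\gtrless h-1$; reconciling this with the stated threshold is the single point I would treat with care, since exactly in the window $h-1<\sum_{i=1}^s n_i/(3n_i-4)<h$ the perturbation reverses sign although $\textup{In}(\Delta(G))$ does not.
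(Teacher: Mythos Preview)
Your treatment of part~$(i)$ is essentially the paper's: both use $\textup{In}(\Delta(G))=(t,0,n-t)$, the inverse formula $\Delta(G)^{-1}=-\mathcal L+\tfrac1\lambda\nu\nu^{t}$, Weyl interlacing, and the simplicity of the zero eigenvalue of $\mathcal L$.

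For part~$(ii)$ you diverge from the paper, and your extra care is justified. The paper's proof imports $\textup{In}(\Delta(G))$ from \cite{JD3} and then applies Theorem~\ref{thm:interlacing} in the form $\lambda_i(\Delta(G)^{-1})\ge\lambda_i(-\mathcal L)\ge\lambda_{i+1}(\Delta(G)^{-1})$ in \emph{both} subcases. That form requires the rank-one correction $\tfrac1\lambda\nu\nu^{t}$ to be positive semidefinite, i.e.\ $\lambda>0$; the paper never checks this. Since $\lambda=\bigl[(h-1)-\sum_{i=1}^s n_i/(3n_i-4)\bigr]\big/\bigl[h-\sum_{i=1}^s n_i/(3n_i-4)\bigr]$, one has $\lambda<0$ exactly in the window $h-1<\sum_{i=1}^s n_i/(3n_i-4)<h$, where the interlacing direction reverses. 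Your cofactor/parity argument is correct and bypasses the sign issue: from $\prod_{\mu\neq 0}\mu=n\cdot\dfrac{(-1)^{n-1}}{\cof\Delta(G)}$ and the interlacing bound $n_-(\mathcal L)\in\{s-1,s\}$ you get $n_-(\mathcal L)=s-1$ iff $\sum_{i=1}^s n_i/(3n_i-4)>h$, and $n_-(\mathcal L)=s$ iff $\sum_{i=1}^s n_i/(3n_i-4)<h$.

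This means the discrepancy you flag cannot be reconciled: the theorem as stated is false in the window $h-1<\sum_{i=1}^s n_i/(3n_i-4)<h$. Take $G=K_{3,1}$, so $s=h=1$, $\sum_{i=1}^s n_i/(3n_i-4)=3/5$, and $0<3/5<1$. Here $\Psi_{3,1}=2$, $\Theta_{3,1}=-3$, and from Eqn.~\eqref{eqn:lap_like_sq_multi}
\[
\mathcal L=\begin{pmatrix}0&-\tfrac14&-\tfrac14&\tfrac12\\[2pt]-\tfrac14&0&-\tfrac14&\tfrac12\\[2pt]-\tfrac14&-\tfrac14&0&\tfrac12\\[2pt]\tfrac12&\tfrac12&\tfrac12&-\tfrac32\end{pmatrix},
\]
with eigenvalues $\tfrac14,\tfrac14,0,-2$ and hence $\textup{In}(\mathcal L)=(2,1,1)=(n-s-1,1,s)$, whereas Eqn.~\eqref{eqn:inertia_lap} predicts $(n-s,1,s-1)=(3,1,0)$. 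So your approach proves the correct dichotomy (threshold at $h$, not $h-1$); it cannot be made to prove the stated one because the stated one is wrong.
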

\begin{proof}
Let $G$ be a complete $t$-partite graph $K_{n_1,n_2,\cdots,n_t}$ on $n=\sum_{i=1}^n n_i$ vertices and  $n_i\geq 2$ for $1\leq i \leq t$. From the proof of~\cite[Theorem 4.1]{JD3}, it is known that $-4$ is the only negative eigenvalue of the squared distance matrix $\Delta(G)$ with multiplicity of $n-t$ and $\textup{In}(\Delta(G))= (t, 0 , n-t).$  Thus, $\det \Delta(G) \neq 0 $,  $-\frac{1}{4}$ is the only negative eigenvalue of $\Delta(G)^{-1}$ with multiplicity of $n-t$, and $\textup{In}(\Delta(G)^{-1})= (t, 0 , n-t).$ Furthermore, under the assumption $\det \Delta(G) \neq 0$ and $\cof \Delta(G) \neq 0$,  using Theorem~\ref{thm:Delta_inv}, the inverse of $\Delta(G)$  can be written as  $$\Delta(G)^{-1} = - \mathcal{L} + \frac{1}{\lambda} \nu \nu^t.$$ 
Therefore,   using the fact that $0$ is a simple eigenvalue of $\mathcal{L}$ due to Theorems~\ref{thm:rank_cof} and~\ref{thm:interlacing}, we get
\begin{align*}
&\lambda_{t-1}(- \mathcal{L}) \geq \lambda_{t}(\Delta(G)^{-1})\geq \lambda_{t}(- \mathcal{L})\geq \lambda_{t+1}(\Delta(G)^{-1})\geq \lambda_{t+1}(- \mathcal{L}),\\
& \mbox{\ \ (+ve) } \quad  \qquad \mbox{\ \ (+ve) }\quad \qquad (0) \quad \qquad \mbox{\ \ (-ve) }  \quad \qquad \mbox{\ \ (-ve) }
\end{align*}
{\it{i.e.,}} $\lambda_{t}(\Delta(G)^{-1})>0  \textup{ and } \lambda_{t+1}(\Delta(G)^{-1})=-\frac{1}{4} <0  \textup{ implies that } \lambda_{i}(- \mathcal{L})>0 \textup{ for } 1\leq i\leq t-1 , \lambda_{t}(- \mathcal{L})=0 \textup{ and }\lambda_{i}(- \mathcal{L})=-\frac{1}{4} \textup{ for }t+1\leq i\leq n$.   
Hence $\textup{In}(- \mathcal{L})= (t-1, 1 , n-t)$ and this proves part $(i).$

Let $t=s+h$ and $h=|\{i :n_i=1\}|$ with $n_i\geq 2$ for $1\leq i \leq s$ and $n_i= 1$ for $s+1\leq i \leq s+h$. From part $(i)$ of Theorem~\ref{thm:det-cof-zero}, $\det \Delta(G) \neq 0$ if and only if $h-1 \neq \ds \sum_{i=1}^s \dfrac{n_i}{3n_i-4}$. Similar to part $(i)$, we prove part~$(ii)$  using $0$ is a simple eigenvalue of $\mathcal{L}$  and the form  $\Delta(G)^{-1} = - \mathcal{L} + \dfrac{1}{\lambda} \nu \nu^t$.

Let $ h-1 >\ds \sum_{i=1}^s \dfrac{n_i}{3n_i-4}$. From~\cite[ Lemma~$4.4$ and Theorem~$4.5$]{JD3}, $\lambda_{i}(\Delta(G)) >0$ for $1\leq i\leq s+1$,  $\lambda_{i}(\Delta(G))<0$ for $s+2\leq i\leq n$,  and $\textup{In}(\Delta(G))= (s+1, 0 , n-s-1)$. Using $0$ is a simple eigenvalue of $\mathcal{L}$ and Theorem~\ref{thm:interlacing}, we get
\begin{align*}
&\lambda_{s}(- \mathcal{L}) \geq \lambda_{s+1}(\Delta(G)^{-1})\geq \lambda_{s+1}(- \mathcal{L})\geq \lambda_{s+2}(\Delta(G)^{-1})\geq \lambda_{s+2}(- \mathcal{L}),\\
& \mbox{\ \ (+ve) } \quad  \qquad \mbox{\ \ (+ve) }\qquad \qquad (0) \qquad \qquad \mbox{\ \ (-ve) }  \quad \qquad \mbox{\ \ (-ve) }
\end{align*}
which implies that $\textup{In}(- \mathcal{L})= (s, 1 , n-s-1)$. Hence $In( \mathcal{L})=  (n-s-1,1,s)$.

Next, let  $ h-1 <\ds \sum_{i=1}^s \dfrac{n_i}{3n_i-4}$. From~\cite[ Lemma~$4.4$ and Theorem~$4.5$]{JD3}, $\lambda_{i}(\Delta(G)) >0$ for $1\leq i\leq s$,  $\lambda_{i}(\Delta(G))<0$ for $s+1\leq i\leq n$,  and $\textup{In}(\Delta(G))= (s, 0 , n-s)$. Using Theorem~\ref{thm:interlacing}, we get
\begin{align*}
&\lambda_{s-1}(- \mathcal{L}) \geq \lambda_{s}(\Delta(G)^{-1})\geq \lambda_{s}(- \mathcal{L})\geq \lambda_{s+1}(\Delta(G)^{-1})\geq \lambda_{s+1}(- \mathcal{L}).\\
& \mbox{\ \ (+ve) } \quad  \qquad \mbox{\ \ (+ve) }\quad \qquad (0) \qquad \qquad \mbox{\ \ (-ve) }  \quad \qquad \mbox{\ \ (-ve) }
\end{align*}
Hence $\textup{In}( \mathcal{L})=  (n-s,1,s-1)$. This completes the proof.
\end{proof}

\begin{cor}
Let $G$ be a complete $t$-partite graph $K_{n_1,n_2,\cdots,n_t}$ on $n=\sum_{i=1}^n n_i$ vertices with $\cof \Delta(G) \neq 0$ and  $\mathcal{L}$ be the Laplacian-like matrix  defined in Eqn.~\eqref{eqn:lap_like_sq_multi}. If $\det \Delta(G) \neq 0$, then the following holds.
\begin{enumerate}
\item [$(i)$] $\dfrac{1}{4}$ is an eigenvalue of $\mathcal{L}$  with multiplicity  $n - t$. 

\item [$(ii)$]  If  $h=|\{i :n_i=1\}|$ and $h\geq 2$, then $1$ is an eigenvalue of $\mathcal{L}$ with  multiplicity  $h - 1$. 
\end{enumerate}
\end{cor}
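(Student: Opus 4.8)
The statement sharpens the lower bounds of Proposition~\ref{prop:eigen_L} into exact multiplicities, the extra input being $\det\Delta(G)\neq0$. Recall from Proposition~\ref{prop:eigen_L} that $\tfrac14$ and $1$ are eigenvalues of $\mathcal{L}$ with multiplicities at least $n-t$ and (when $h\geq2$) at least $h-1$; since $\tfrac14\neq1$ and both are positive, together they account for at least $(n-t)+(h-1)$ positive eigenvalues of $\mathcal{L}$. The plan is to show that this lower bound is exactly $\mathbf{n}_{+}(\mathcal{L})$, which forces both multiplicities to be attained and precludes any further positive eigenvalue.

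When $n_i\geq2$ for all $i$ (so $h=0$ and part~$(ii)$ is vacuous), Theorem~\ref{thm:inertial_L}$(i)$ gives $\mathbf{n}_{+}(\mathcal{L})=n-t$, and since $\tfrac14$ already contributes $\geq n-t$ positive eigenvalues its multiplicity is exactly $n-t$. When singletons are present, the immediate case is $h-1>\sum_{i=1}^{s}\tfrac{n_i}{3n_i-4}$: here Theorem~\ref{thm:inertial_L}$(ii)$ yields $\mathbf{n}_{+}(\mathcal{L})=n-s-1=(n-t)+(h-1)$, exactly the sum of the two lower bounds, so both are tight and no other positive eigenvalue survives. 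This settles the corollary in these cases.

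The main obstacle is the remaining singleton configurations, where a raw count could a priori leave room for one stray positive eigenvalue equal to $\tfrac14$ or $1$. I would resolve this structurally, in a way insensitive to the sign of $\lambda$. Because $\nu$ is constant on each class $V_i$ by Eqn.~\eqref{eqn:nu_sq_multi}, $\nu$ lies in the subspace $W$ of vectors constant on each class and is orthogonal to the complementary subspace $U$ of vectors summing to zero on each class. Both are invariant under $\Delta(G)$, with $\Delta(G)|_{U}=-4I$ (so $U=\ker(\Delta(G)+4I)$, of dimension $n-t$) and $\Delta(G)|_{W}=Q$ a $t\times t$ matrix; as $\nu\in W$, they are also invariant under $\mathcal{L}=-\Delta(G)^{-1}+\tfrac1\lambda\nu\nu^{t}$, with $\mathcal{L}|_{U}=\tfrac14 I$ and $\mathcal{L}|_{W}=-Q^{-1}+\tfrac1\lambda\nu\nu^{t}$. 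Consequently the entire $\tfrac14$-multiplicity beyond $n-t$ comes from $\mathcal{L}|_{W}$, and every $1$-eigenvector lies in $W$, so everything reduces to the $t$-dimensional block $\mathcal{L}|_{W}$.

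Inside $W$ I would invoke $\Delta(G)\nu=\lambda\mathds{1}$ (Lemma~\ref{lem:sq-nu-1}). A vector $w\in W$ with $\mathcal{L}|_{W}w=\tfrac14 w$ satisfies $(\Delta(G)+4I)w=4(\nu^{t}w)\mathds{1}$, and since $\det(Q+4I)=3^{t-1}(t+3)\prod_{i}n_i\neq0$ we have $-4\notin\textup{spec}(Q)$, forcing $\nu^{t}w\neq0$; solving for the unique such $w$ and imposing consistency collapses to the single scalar identity $4\sum_{i=1}^{t}\Phi_{\widehat{n_i}}=(t+3)\,\Psi_{n_1,\dots,n_t}$, which by the identities of Lemma~\ref{lem:identities} and Theorem~\ref{thm:detD_single} is exactly the condition $\det\Delta(G)=0$ and is therefore excluded. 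Analogously, an extra $1$-eigenvector $w\in W$ obeys $(\Delta(G)+I)w=(\nu^{t}w)\mathds{1}$; here $\textup{rank}(Q+I)=s+1$ gives $\dim\ker(Q+I)=h-1$, these being precisely the $h-1$ known eigenvectors and all orthogonal to $\nu$, so an additional one again requires $\nu^{t}w\neq0$, collapsing to a scalar identity equivalent to $\cof\Delta(G)=0$ via Theorem~\ref{thm:cof}, which is excluded by the standing hypothesis. The genuinely hard step is this last one: evaluating the two scalar conditions and recognizing them as $\det\Delta(G)=0$ and $\cof\Delta(G)=0$. Once that is done, both exact multiplicities follow at once under $\det\Delta(G)\neq0$ together with $\cof\Delta(G)\neq0$.
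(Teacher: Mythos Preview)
Your approach is genuinely different from the paper's and, modulo one misidentification, it works. The paper cites \cite{JD3} for the \emph{exact} multiplicities of $-4$ (namely $n-t$) and $-1$ (namely $h-1$) as eigenvalues of $\Delta(G)$, and then appeals to the interlacing between $\Delta(G)^{-1}$ and $-\mathcal{L}$ already used in Theorem~\ref{thm:inertial_L}. Your route avoids that external input: you split $\mathbb{R}^n=U\oplus W$ into class-mean-zero and class-constant parts, observe $\mathcal{L}|_U=\tfrac14 I$, and reduce both questions to whether $\tfrac14$ or $1$ occurs as an eigenvalue of the $t\times t$ block $\mathcal{L}|_W$. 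This is more self-contained and yields clean necessary-and-sufficient scalar criteria.

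Your treatment of the $\tfrac14$ case is correct: the consistency condition $4\sum_i\Phi_{\widehat{n_i}}=(t+3)\Psi_{n_1,\dots,n_t}$ is equivalent to $\Theta_{n_1,\dots,n_t}=0$, i.e., $\det\Delta(G)=0$, via the identity $4\sum_i\Phi_{\widehat{n_i}}=3\Psi-t\Phi$ that falls out of Lemma~\ref{lem:identities}.

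The gap is in the $1$-eigenvalue case. Carrying out your own recipe, an extra eigenvector with $\nu^t w\neq 0$ forces $w_i=0$ for $i\le s$ and reduces to the condition that the common singleton value $\nu_*:=\Phi_{\widehat{n_{s+1}}}/\Psi$ equal $1$, i.e., $\Phi_{\widehat{n_{s+1}}}=\Psi$. By Lemma~\ref{lem:identities}(c) with $n_{s+1}=1$ this is $\Psi_{\widehat{n_{s+1}}}=0$, and applying the computation of Theorem~\ref{thm:det-cof-zero} to the $(t-1)$-tuple with one fewer singleton gives $h-1=\sum_{i=1}^s n_i/(3n_i-4)$. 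That is $\det\Delta(G)=0$, \emph{not} $\cof\Delta(G)=0$ as you assert. So both scalar obstructions collapse to the single hypothesis $\det\Delta(G)\neq0$; the assumption $\cof\Delta(G)\neq0$ plays no role beyond making $\mathcal{L}$ well defined. With this correction your argument goes through unchanged.
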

\begin{proof}
Let $G$ be a complete $t$-partite graph $K_{n_1,n_2,\cdots,n_t}$ on $n=\sum_{i=1}^n n_i$ vertices.  We complete the proof by considering the following cases.

Let $n_i\geq 2$ for $1\leq i \leq t$. From the proof of part $(i)$ of Theorem~\ref{thm:inertial_L}, it is follows that $\frac{1}{4}$ is an eigenvalue of $\mathcal{L}$ with multiplicity at most $n-t$. Thus, the result follows from Proposition~\ref{prop:eigen_L}.

Next, let $t=s+h$ with  $n_i\geq 2$ for $1\leq i\leq s$ and $n_i=1$ for $s+1 \leq i\leq t=s+h$. From part $(iii)$ of ~\cite[ Lemma~$4.4$]{JD3}, $-4$ is an eigenvalue of $\Delta(G)$ with multiplicity $n-t$ and $-1$ is an eigenvalue of $\Delta(G)$ with multiplicity $h-1$. Therefore, the result follows from the interlacing of eigenvalues of $\Delta(G)^{-1}$ and $-\mathcal{L}$ arguments used in the proof of  Theorem~\ref{thm:inertial_L}.
\end{proof}

We conclude the article with a conjecture. Given a complete $t$-partite graph $K_{n_1,n_2,\cdots,n_t}$, by part~$(i)$ of the Theorem~\ref{thm:det-cof-zero} $\det \Delta(K_{n_1,n_2,\cdots,n_t})=0$ if and only if $h-1 = \ds\sum_{i=1}^s \dfrac{n_i}{3n_i-4}$, where  $t=s+h$ with  $n_i\geq 2$ for $1\leq i\leq s$ and $n_i=1$ for $s+1 \leq i\leq t=s+h$. Based on the examples encountered during the preparation of this manuscript, we believe the following holds true. If $\det \Delta(K_{n_1,n_2,\cdots,n_t})=0$ and $\mathcal{L}$ be the Laplacian-like matrix  defined in Eqn.~\eqref{eqn:lap_like_sq_multi} then 
\begin{enumerate}
\item[$(i)$] $\frac{1}{4}$ and $1$ are the only positive eigenvalues of $\mathcal{L}$ with multiplicity $n-t$ and $h-1$, respectively.

\item[$(ii)$] The inertia $\textup{In}(\mathcal{L})=(n-s-1,1,s).$
\end{enumerate}

\vspace*{.4cm}

\noindent{\textbf{\Large Acknowledgements}} The authors thank the anonymous referees for a careful reading of the manuscript and for various suggestions. The first author  Joyentanuj Das is partially supported by the National Science and Technology Council in Taiwan (Grant ID: NSTC-111-2628-M-110-002) and  EPSRC Early Career Fellowship (EP/T00729X/1) by U.K. Research and Innovation in the U.K.\\ 

\noindent{\textbf{\Large Statements and Declarations}}\\

\noindent{\textbf{\large Conflict of Interest}}  The authors have  no competing interests to declare.\\


\small{

}

\begin{appendices}

\section{Proof of the Lemma~\ref{lem:identities}}\label{apn_1}
	Let $n_i\in \mathbb{N}$ for $1\leq i\leq t$ and  $t\geq 2$. We complete the proof with repeated application of Eqns.~\eqref{eqn:phi} -~\eqref{eqn:theta}. Thus,
	\begin{align*}
			\ds \Theta_{n_1,n_2,\cdots,n_t} &= \Phi_{n_1,n_2,\cdots,n_t} + \Psi_{n_1,n_2,\cdots,n_t}\\
			&= (3n_i-4)\Phi_{\widehat{n_i}} + \sum_{k=1}^t \left(n_k \prod_{j=1 \atop j\neq k}^{t} (3n_j-4)\right)\\
			&= (3n_i-4)\Phi_{\widehat{n_i}} + \sum_{k=1}^t n_k \Phi_{\widehat{n_k}}.
	\end{align*}	
This proves part $(a)$ of the lemma. Similarly, to prove part $(b)$, 
\begin{align*}
			\ds \Theta_{n_1,n_2,\cdots,n_t} &= \Phi_{n_1,n_2,\cdots,n_t} + \Psi_{n_1,n_2,\cdots,n_t}\\
			&= (3n_i-4)\Phi_{\widehat{n_i}} + \sum_{k=1}^t \left(n_k \prod_{j=1 \atop j\neq k}^{t} (3n_j-4)\right)\\
			&= (3n_i-4)\Phi_{\widehat{n_i}} + (3n_i-4)\sum_{k=1 \atop k \neq i}^t \left(n_k \prod_{j=1 \atop j\neq i,k}^{t} (3n_j-4)\right) + n_i \prod_{j=1 \atop j\neq i}^{t} (3n_j-4)\\
			&= (3n_i-4)\Phi_{\widehat{n_i}} + (3n_i-4)\Psi_{\widehat{n_i}} + n_i \prod_{j=1 \atop j\neq i}^{t} (3n_j-4)\\
			&= (3n_i-4) \left(\Phi_{\widehat{n_i}} + \Psi_{\widehat{n_i}}\right) + n_i \Phi_{\widehat{n_i}}\\
			&= (3n_i-4)\Theta_{\widehat{n_i}}+ n_i \Phi_{\widehat{n_i}}.
		\end{align*}
Next, 
\begin{align*}
			\ds \Psi_{n_1,n_2,\cdots,n_t} &= \sum_{k=1}^t \left(n_k \prod_{j=1 \atop j\neq k}^{t} (3n_j-4)\right)\\
			&= (3n_i-4) \sum_{k=1 \atop k \neq i}^t \left(n_k \prod_{j=1 \atop j\neq i,k}^{t} (3n_j-4)\right) + n_i \prod_{j=1 \atop j\neq i}^{t} (3n_j-4)\\
			&= (3n_i-4)\Psi_{\widehat{n_i}} + n_i\Phi_{\widehat{n_i}}.
		\end{align*}
This establish the part $(c)$. Finally, to prove part $(d)$, 		
\begin{align*}
			\ds \Psi_{\widehat{n_i}}
			&=\sum_{k=1\atop k\neq i}^t n_k  \Phi_{\widehat{n_i,n_k}}\\
			&= (3n_j-4) \sum_{k=1 \atop k \neq i,j}^t \left(n_k \prod_{l=1 \atop l\neq i,j,k}^{t} (3n_l-4)\right) + n_i \prod_{l=1 \atop l\neq i,j}^{t} (3n_l-4)\\
			&=(3n_j-4)\Psi_{\widehat{n_i,n_j}}+n_j \Phi_{\widehat{n_i,n_j}}.
		\end{align*}		
This completes the proof.	\hfill $\square$

\end{appendices}

\end{document}